\documentclass[11pt]{amsart}

\usepackage[a4paper,margin=30mm]{geometry}
\usepackage[T1]{fontenc}
\usepackage[utf8]{inputenc}
\usepackage{lmodern}
\usepackage{microtype}
\usepackage{amsmath,amssymb,amsthm,mathtools}
\usepackage{booktabs,tabularx,array}
\usepackage{enumitem}
\usepackage{xcolor}
\usepackage{listings}
\usepackage{tikz-cd}
\usepackage{hyperref}
\usepackage[nameinlink,capitalise,noabbrev]{cleveref}
\usepackage{comment}

\hypersetup{
  colorlinks=true,
  linkcolor=blue!45!black,
  citecolor=green!35!black,
  urlcolor=blue!55!black,
  pdftitle={The Real Quotient of the Cartwright--Steger Surface},
  pdfauthor={Andras Stipsicz and Zoltan Szabo}
}

\setlist{topsep=4pt,itemsep=2pt,parsep=1pt}

\definecolor{codebg}{RGB}{247,248,250}
\definecolor{codeframe}{RGB}{205,210,218}
\newtheorem{theorem}{Theorem}[section]
\newtheorem{proposition}[theorem]{Proposition}
\newtheorem{lemma}[theorem]{Lemma}
\newtheorem{corollary}[theorem]{Corollary}
\theoremstyle{definition}
\newtheorem{definition}[theorem]{Definition}
\theoremstyle{remark}
\newtheorem{remark}[theorem]{Remark}

\newcommand{\Z}{\mathbb Z}
\newcommand{\Q}{\mathbb Q}
\newcommand{\R}{\mathbb R}
\newcommand{\C}{\mathbb C}
\newcommand{\CP}{\mathbb{CP}}
\newcommand{\RP}{\mathbb{RP}}
\newcommand{\B}{\mathbb B^2_{\C}}
\newcommand{\Fix}{\operatorname{Fix}}

\newcommand{\sgn}{\operatorname{sign}}
\newcommand{\GammaMax}{\overline\Gamma}
\newcommand{\PiCS}{\Pi}
\newcommand{\Alb}{\operatorname{Alb}}
\newcommand{\Aut}{\operatorname{Aut}}
\newcommand{\id}{\operatorname{id}}
\newcommand{\normal}[1]{\mathopen{\langle\!\langle}#1\mathclose{\rangle\!\rangle}}
\newcommand{\zetaT}{\zeta_{12}}
\newcommand{\omegaE}{\omega}
\newcommand{\phiA}{\varphi}

\title{The real quotient of the Cartwright--Steger surface}
\author{Andr\'as I. Stipsicz}
\author{Zolt\'an Szab\'o}
\date{August 2026}

\begin{document}

\begin{abstract}
Borisov and Yeung showed that the Cartwright--Steger surface $X$ is
defined over $\mathbb Q$, hence complex conjugation gives an
antiholomorphic involution $c_{\mathrm{BY}}$ on $X$.
We show that the orbit space
$Y=X/\langle c_{\mathrm{BY}}\rangle$ is a closed oriented smooth four-manifold
homeomorphic to $\CP^2\#(S^1\times S^3)$, presenting $X$ as a double
branched cover $X\to Y$, with branch locus diffeomorphic to $\#_3 {\RP}^2$.
\end{abstract}

\maketitle

\section{Introduction}

The classification of fake projective planes (compact complex surfaces
of general type with the same rational cohomology ring as the complex
projective plane $\CP^2$) by Prasad--Yeung and Cartwright--Steger
\cite{PY,CS} led to the discovery of another interesting complex
surface, the \emph{Cartwright--Steger surface} $X$~\cite{CS}. It is a compact
complex surface with Euler characteristic $\chi (X)=3$, signature
$\sgn (X)=1$ and first Betti number $b_1(X)=2$. It follows that $X$ is
on the Bogomolov--Miyaoka--Yau line and (like all fake projective
planes) it is a quotient of the complex two-ball \cite{Miyaoka,Yau}.
It is known that a fake projective plane admits no antiholomorphic
involution \cite[Theorem~5.1]{KharKul}, while complex conjugation on $\CP ^2$
provides such a map.  The quotient of $\CP ^2$ by this involution was
shown to be diffeomorphic to $S^4$, while the fixed point set of this
involution in $\CP ^2$ is
diffeomorphic to ${\RP} ^2$ \cite{Kuiper, Massey}.

Cartwright, Koziarz and Yeung \cite{CKY} gave an explicit arithmetic
presentation of the fundamental group of the Cartwright--Steger
surface $X$, determined its holomorphic automorphism group
$\Aut_{\mathrm{hol}}(X)$, and described its Albanese fibration; the
latter provides a Lefschetz fibration $\alpha \colon X\to E$ where $E$
is the hexagonal elliptic curve $E=\C/(\Z+\omegaE\Z)$ with
$\omegaE=e^{2\pi i/3}$, see \cref{thm:three-nodes}. The existence of
an antiholomorphic involution on $X$ was first proved by Borisov and
Yeung \cite[Introduction and Remark~5.1]{BY}: their bicanonical
equations can be chosen over $\mathbb Q$, hence coordinatewise complex
conjugation preserves the surface and defines an antiholomorphic
involution $c_{\mathrm{BY}}$ on the Cartwright--Steger surface
$X$. The aim of the present paper is to determine the topology of
$X/\langle c_{\mathrm{BY}}\rangle $.

Let $\sigma$ generate $\Aut_{\mathrm{hol}}(X)\cong\Z/3\Z$.  An
antiholomorphic involution $c$ satisfying
$c\sigma c=\sigma^{-1}$ will be called a \emph{reflection}.  In the
following we construct an
arithmetic reflection $c_s$ on the complex ball with the property that
it descends to the Cartwright--Steger surface $X$, and together with
$\Aut_{\mathrm{hol}}(X)$, generates an $S_3$-action on $X$.

\begin{theorem}\label{thm:main}
Let $c$ be any reflection in the $S_3$-action on the
Cartwright--Steger surface $X$, and let $Y=X/\langle c\rangle$.  Then $Y$ is
a closed oriented smooth four-manifold, satisfying
\begin{enumerate}[label=\textup{(\alph*)}]
\item $\pi_1(Y)\cong\mathbb Z$.  Indeed, the Albanese map
  $\alpha \colon X\to E$ descends to
      \[
       \bar\alpha\colon Y\longrightarrow E/\langle z\mapsto\bar z\rangle ={\mathcal {M}},
      \]
where ${\mathcal {M}}$ is the M\"obius band, and $\bar\alpha_*$ is an
isomorphism $\pi _1(Y)\to \pi _1 ({\mathcal   {M}})\cong \Z$.
      
\item $Y$ is homeomorphic to $\CP^2\#(S^1\times S^3)$.
\end{enumerate}
\end{theorem}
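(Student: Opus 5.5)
Let $F=\Fix(c)\subset X$. The plan is to first get the global structure of $Y$, then compute $\pi_1$ via the Albanese map, then the Euler characteristic and intersection form, and finally invoke Freedman-type classification for $\pi_1=\Z$.

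\emph{Step 1: $Y$ is a manifold.} Since $c$ is an antiholomorphic involution, near a fixed point it is locally conjugate to complex conjugation on $\C^2$. Hence $F$ is a smooth closed totally real surface (possibly non-orientable). The quotient $\C^2/(z\mapsto\bar z)\cong\R^2\times(\C/\text{conj})$ is locally $\R^4$, because $\C/\text{conj}$ is a half-plane and the orbit space of $\R^2\times\C$ under $(x,y)\mapsto(x,\bar y)$ twice is again $\R^4$ (the standard local model $\C^2/\mathrm{conj}\cong\R^4$). So $Y$ is a closed topological manifold, smooth via the usual branched-cover smoothing. For orientability, note that $c$ reverses the complex orientation exactly when $n$ is odd, with $n=2$ here, so $c$ preserves the orientation. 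Consequently $Y$ is oriented and $X\to Y$ is a double cover branched along $F$.

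\emph{Step 2: $\pi_1(Y)\cong\Z$.} I would use that $\alpha\circ c=\bar{\ }\circ\alpha$ for a suitable choice of origin on $E$; this is forced because $c$ normalizes the Albanese data and $E$ is defined over $\Q$. Thus $\alpha$ descends to $\bar\alpha$. Because $F\ne\emptyset$, $\pi_1(Y)$ is a quotient of the orbifold group $\pi_1(X)\rtimes\Z/2$ by the normal closure of the reflections. The task is then to show that $\bar\alpha_*$ is an isomorphism. Surjectivity should come from connected fibres of $\alpha$ and the sections and real points available. Injectivity is where the explicit presentation of $\pi_1(X)$ from \cite{CKY} enters, together with the lift of $c$ as an arithmetic reflection $c_s$ in the lattice. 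I would compute the orbifold fundamental group $\langle\Pi,c_s\rangle$ and kill all elements of order two that have fixed points. This amounts to verifying that every conjugate of $c_s$ times $\Pi$ gives reflections whose normal closure has quotient $\Z$. I expect this group-theoretic computation, which is likely computer-assisted with the explicit generators, to be the main obstacle. A cleaner alternative is geometric: use the Lefschetz fibration and show that a generic real fibre over the core of $\mathcal M$ meets $F$, so that fibre classes and vanishing cycles die in $Y$.

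\emph{Step 3: homology and intersection form.} The Euler characteristic satisfies $\chi(Y)=(\chi(X)+\chi(F))/2$. The signature satisfies $\sgn(Y)=(\sgn(X)+\tfrac12 F\cdot F)/2$ by the $G$-signature theorem (Hirzebruch), and for an antiholomorphic involution $F\cdot F=-\chi(F)$. With $\pi_1(Y)=\Z$ we have $b_1=b_3=1$, so $\chi(Y)=b_2(Y)$. To obtain $b_2=1$ we need $\chi(F)=-1$, i.e.\ $F\cong\#_3\RP^2$. I would establish this via Smith theory and counting: $\dim H_*(F;\Z/2)\le\dim H_*(X;\Z/2)$, combined with counting fixed points of the real structure along the real Albanese fibration (Lefschetz fixed point for $c$ on fibres, plus the three nodes). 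Then $\sgn(Y)=(1+\tfrac12)/2$ is not an integer, so I must instead verify the convention $F\cdot F=+\chi$ with the appropriate orientation. The sign has to be fixed so that $\sgn(Y)=1$, i.e.\ $Q_Y=\langle1\rangle$. Integral unimodularity on $H_2/\text{torsion}$ is automatic for a closed oriented manifold.

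\emph{Step 4: classification.} For closed oriented topological 4-manifolds with $\pi_1=\Z$, Freedman--Quinn (extended by Kreck and others to good groups) says the homeomorphism type is determined by the equivariant intersection form on $\pi_2$ and the Kirby--Siebenmann invariant. Since $\pi_2(Y)\cong H_2(\tilde Y)$ is a free $\Z[t^{\pm1}]$-module of rank $1$ with form $\langle1\rangle$, which I would extract from $\chi$ and $\sgn$ of the infinite cyclic cover, the form is extended from $\Z$. Smoothness gives $KS=0$. Hence $Y\approx\CP^2\#(S^1\times S^3)$.
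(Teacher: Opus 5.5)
Your architecture matches the paper's: Armstrong for an upper bound on $\pi_1(Y)$, the descended Albanese map for a lower bound, $\chi$ and $\sgn$ of $Y$ from the fixed surface, then Freedman--Quinn over $\Lambda$. The gap is that the step all of Step~3 rests on, $\chi(F)=-1$, is not proved. The Smith inequality only gives $\dim H_*(F;\Z/2)\le 11$. Together with $\chi(F)\equiv\chi(X)$ mod $2$ and $|\sgn(Y)|\le b_2(Y)$, it does not exclude, say, $F=\RP^2$, which would give $b_2(Y)=2$ and $\sgn(Y)=0$. In the fibration count, regular real fibres meet $F$ in circles and contribute nothing. So everything depends on how many nodes are real and what each contributes, and you determine neither; counting all three nodes would suggest $-3$.

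The missing idea is the following. The involution $c$ permutes the free $\langle\sigma\rangle$-orbit of nodes, and $c\sigma c=\sigma^{-1}$ forbids $c$ from fixing all three, since that would give $\sigma n=\sigma^{-1}n$. Hence $c$ is a transposition and exactly one node lies on $F$. Critical points of $\alpha|_F\colon F\to E^{c_E}\cong S^1$ are exactly critical points of $\alpha$ on $F$, because $d\alpha_p$ is complex linear and $T_pX=T_pF\oplus J\,T_pF$. So this node is the only one. It is nondegenerate, because the real Hessian is the restriction of the nondegenerate complex Hessian to a totally real plane. Its index is $1$, because the regular level sets on both sides of the critical value are diffeomorphic, so no circle is created or destroyed. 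Poincar\'e--Hopf then gives $\chi(F)=-1$. Alternatively, each component of $F$ is a closed totally geodesic hyperbolic surface, so $\chi(F)\le-1$, and $|\sgn(Y)|\le b_2(Y)$ with the correct signature formula below forces $\chi(F)\ge-1$. Also, $\chi(F)=-1$ gives $\#_3\RP^2$ only once $F$ is known to be connected, which again uses hyperbolicity of the components.

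Your signature formula is wrong. For an involution with fixed surface $F$ one has $\sgn(Y)=\tfrac12\bigl(\sgn(X)+e(\nu_{F/X})\bigr)$. A factor $\tfrac12$ in front of the self-intersection appears only for the branch surface in $Y$, whose self-intersection is $2e(\nu_{F/X})$. With $e(\nu_{F/X})=-\chi(F)=1$ this gives $\sgn(Y)=1$ directly. Even without fixing the sign, $\tfrac12(1\pm1)\in\{0,1\}$ and $b_2(Y)=1$ force $\sgn(Y)=1$. Choosing the sign ``so that $\sgn(Y)=1$'' is circular, and with your formula neither sign gives an integer.

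In Step~2, injectivity is left as an unperformed computation, framed as killing \emph{all} reflections with fixed points, which would require enumerating $N$. The paper avoids this. Since $\bar\alpha_*$ is already onto $\Z$, it suffices to kill the single reflection $s$. Then $\Gamma_{\mathbb R}/\normal{s}$ is the coequalizer $\PiCS/\normal{\phiA(a_i)a_i^{-1}\mid i=1,2,3}$, which a GAP computation shows is $\Z$, and $\Z\twoheadrightarrow\pi_1(Y)\twoheadrightarrow\Z$ forces an isomorphism. This needs the explicit words $\phiA(a_i)$. Those words also give $c_*$ on $H_1$, hence that $c_E$ is conjugation and the target is a M\"obius band.

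Your geometric alternative misplaces the real fibres. The $c$-invariant fibres lie over the fixed circle of $c_E$, that is, over $\partial\mathcal M$, not over the core. Moreover, a fibre meeting $F$ does not by itself kill that fibre's $\pi_1$ in $Y$.

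In Step~4, the rank and sign of $\lambda_Y$ cannot be read off from the noncompact infinite cyclic cover. Use $\dim_{\Q(t)}H_2(Y;\Q(t))=\chi(Y)$ for the rank and augmentation to $Q_Y$ for the sign, as in the paper.
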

The proof also provides the diffeomorphism type of the branch locus of the
branched cover $q_X\colon X\to Y$.

\begin{corollary}\label{cor:branched-cover}
The Cartwright--Steger surface $X$ is a double branched cover of a smooth
four-manifold $Y$ homeomorphic to $\CP^2\#(S^1\times S^3)$, branched along the
branch locus  diffeomorphic to the connected sum of three
real projective planes $\# _3 \RP ^2$.
\end{corollary}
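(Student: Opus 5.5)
The plan is to identify the fixed point set $F=\Fix(c)\subset X$, since $q_X$ is branched exactly along $F$. Because $c$ is an antiholomorphic involution of a complex surface, $F$ is either empty or a smooth closed (possibly non-orientable) real surface, totally real in $X$. Hence $Y$ is a smooth manifold, and $q_X$ is a double cover branched along $F$. Its diffeomorphism type is pinned down by three invariants: the number of components, the Euler characteristic $\chi(F)$, and orientability.

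First I compute $\chi(F)$. Since $Y$ is homeomorphic to $\CP^2\#(S^1\times S^3)$ by \cref{thm:main}(b), we have $\chi(Y)=3-2=1$. The Riemann--Hurwitz formula for a double branched cover gives
\[
\chi(X)=2\chi(Y)-\chi(F),
\]
so $3=2-\chi(F)$ and $\chi(F)=-1$.

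Next I check that $F$ is connected. I use the descended Albanese map $\bar\alpha\colon Y\to\mathcal M$ and its restriction $\alpha|_F\colon F\to\Fix(z\mapsto\bar z \text{ on } E)$. For the hexagonal curve, this fixed locus is a single circle, the boundary of $\mathcal M$. Each fiber of $\alpha$ over a real point carries a real structure, and $F\cap\alpha^{-1}(t)$ is the real locus of that fiber. I would then follow the real loci through the real one-parameter family, including the real singular fibers described in \cref{thm:three-nodes}, to see that the components glue into one surface. Alternatively, since $q_X$ is a ramified double cover with $\pi_1(Y)\cong\Z$, one can apply Smith theory: $\dim H_*(F;\Z/2)\le\dim H_*(X;\Z/2)$. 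Here $H_*(X;\Z/2)$ has total dimension $1+2+1+2+1=7$, assuming there is no $2$-torsion.

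Finally, non-orientability, which I expect to be the main obstacle. For a real structure on a complex surface, the normal bundle of $F$ is $J(TF)\cong TF$. So the self-intersection satisfies $F\cdot F=-\chi(F)$ with $\Z/2$ coefficients in general, or the corresponding twisted statement. Orientability of $F$ would force $\chi(F)$ to be even, whereas $\chi(F)=-1$ is odd. So a connected $F$ with $\chi=-1$ is non-orientable with nonorientable genus $2-\chi=3$, that is, $F\cong\#_3\RP^2$. The delicate part is the connectedness step. The parity argument already forces at least one non-orientable component, but excluding extra components (for example a torus or sphere plus other pieces summing to $-1$) needs either the Smith inequality together with Kharlamov-type congruences, or the explicit fiberwise analysis over the real circle in $E$. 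I would try the Smith-theoretic bound first, using $\pi_1(Y)\cong\Z$ and the Gysin sequence for $q_X$ to compute $H_*(F;\Z/2)$ directly.
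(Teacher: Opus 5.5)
Your derivation of $\chi(F)=-1$ from $\chi(Y)=1$ via $\chi(X)=2\chi(Y)-\chi(F)$ is legitimate once \cref{thm:main}(b) is available. (The paper proves $\chi(F)=-1$ independently in \cref{prop:chi-F} and uses it to get (b).) Your last step is also fine and needs no normal-bundle discussion: a connected closed surface with odd Euler characteristic is non-orientable, so it is $\#_3\RP^2$. The genuine gap is connectedness, and neither of your two routes closes it.

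The Smith bound is miscomputed. From $\chi(X)=3$ and $b_1=b_3=2$ you get $b_2(X)=5$, and $H_*(X;\Z)$ is torsion free, so $\dim H_*(X;\Z/2)=11$, not $7$. A closed surface with $k$ components and $\chi=-1$ has $\dim H_*(F;\Z/2)=4k+1$. So Smith theory only gives $k\le 2$, and its parity statement says nothing because both numbers are odd. The surfaces $\#_3\RP^2\sqcup T^2$, $\#_3\RP^2\sqcup\#_2\RP^2$, $\#_5\RP^2\sqcup S^2$ and $\RP^2\sqcup\#_4\RP^2$ all survive. Kharlamov-type congruences do not obviously help either: for $k=2$ one has an $(M-1)$-structure, and $\chi(F)=-1=\sgn(X)-2$ is compatible with the Gudkov--Krakhnov--Kharlamov congruence.

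The fiberwise analysis has a built-in blind spot. By \cref{lem:one-real-node,lem:index-one}, the component through the real node has $\chi=-1$. Any further component contains no critical point of $\alpha_{\R}$, so $\alpha_{\R}$ restricts to a submersion of it onto the real circle, and it is a torus or a Klein bottle. Such components leave no trace in the degeneration of real fibers, yet they are exactly what has to be excluded.

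The missing ingredient is the hyperbolic metric. Every lift of $c$ with a fixed point is an antiholomorphic isometric involution $\gamma s$ of $\B$, with $\gamma\in\PiCS$. Its fixed set is a totally geodesic, totally real copy of the real hyperbolic plane. Each component of $F$ is the quotient of such a plane by its stabilizer in the torsion-free group $\PiCS$, hence a closed hyperbolic surface with $\chi<0$. (Equivalently, $F$ is totally geodesic in a negatively curved metric, and Gauss--Bonnet applies to each component.) Every component then has $\chi\le-1$ while the total is $-1$, so $F$ is connected. This is the paper's argument; with it in place of your connectedness step, the rest of your proof goes through.
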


\begin{remark}\label{rem:exotic}
It would be interesting to determine whether $Y$ is an \emph{exotic}
$\CP^2\#(S^1\times S^3)$, or is diffeomorphic to
$\CP^2\#(S^1\times S^3)$.  In the latter case the identification of the
branch locus is an intriguing question. The authors expect that $Y$ is
indeed diffeomorphic to $\CP ^2\# (S^1\times S^3)$ and hope to return
to this question in a subsequent work.
\end{remark}

In the computation of $\pi_1(Y)$ we use Armstrong's theorem
\cite{Armstrong} to get a cyclic upper bound for the fundamental
group, while the descended Albanese map gives a surjection onto
$\mathbb Z$.  One of the arguments is assisted by a \textsf{GAP}
program \cite{GAP4}.  Part~(b) of Theorem~\ref{thm:main} relies on the
Freedman--Quinn classification of closed oriented topological
four-manifolds with fundamental group $\Z$, for which the equivariant
intersection form over $\Lambda=\Z[t,t^{-1}]$ is a complete invariant
among the smooth ones.

\subsection*{Acknowledgements}
The first author was partially supported by the NKFIH Grant K146401
and by ERC Advanced Grant KnotSurf4d. The second author was partially
supported by the Simons Grant \emph{New structures in low dimensional
topology}.  The authors would like to thank Inan\c{c} Baykur for
helpful discussions.  The authors  acknowledge that
OpenAI’s ChatGPT and Anthropic’s Claude were used as research-support
tools for exploratory calculations and editorial revisions.
They also acknowledge the use of OpenAI's ChatGPT to help write
the GAP script given in the Appendix (and used in one of the arguments of
Section 3).
The outputs were not treated as mathematical authority, and the authors
assume full intellectual responsibility for the content.

\section{The arithmetic model of $X$ and its real structures}\label{sec:arithmetic}

We start by recalling the definition of the Cartwright--Steger surface and
listing some facts about it.

\subsection{The Cartwright--Steger lattice}

Let
$ \zeta=\zetaT=e^{\pi i/6}, r=\zeta+\zeta^{-1}=\sqrt3 $,
and consider the Hermitian form
\[
 F_0=
 \begin{pmatrix}
 r+1&-1&0\\
 -1&r-1&0\\
 0&0&-1
 \end{pmatrix}.
\]
By a \emph{line} we always mean a complex line through the origin in
$\C^3$, that is a point of $\CP^2$.  Such a line $L$ is called
\emph{negative}, \emph{null} or \emph{positive} according to the sign
of $F_0(v,v)=v^*F_0v$ for $0\ne v\in L$ (where $v^*={\overline
  {v}}^T$); since $F_0(\lambda v,\lambda v) =|\lambda|^2F_0(v,v)$,
that sign depends only on the line and not on the chosen
representative. Define
\[
{\mathrm {U}}(F_0)=\{ A\in {\mathrm {GL}}_3(\C )\mid A^*F_0A=F_0\}
\]
and ${\mathrm {PU}}(F_0)$ as its quotient by the center $Z({\mathrm {U}}(F_0))$:
\[
  {\mathrm {PU}}(F_0)={\mathrm {U}}(F_0)/\{\lambda {\mathrm {Id}}_3\mid |\lambda
  |=1\}.
\]

As the eigenvalues of $F_0$ are $\sqrt3\pm\sqrt2$ and $-1$, the form
$F_0$ has signature $(2,1)$.  Diagonalizing $F_0$ and passing to the
affine chart in which the last coordinate equals $1$ identifies the
negative lines with the standard unit complex ball in $\C^2$; they therefore
form a nonempty open subset of $\CP^2$ biholomorphic to the unit complex ball,
and this is the model of $\B$ we will use throughout.
Since the three classes are preserved
by any matrix preserving $F_0$, the group $\mathrm{PU}(F_0)$ acts on
$\B$, which is how the arithmetic lattice we describe below acts by isometries.
For example, the negative line $[0:0:1]$ is a point of $\B$; it will
be the base point when such a choice is needed in what follows.

Following \cite{CKY}, consider the matrices
\begin{align}
 u_0&=
 \begin{pmatrix}
 \zeta^3+\zeta^2-\zeta&1-\zeta&0\\
 \zeta^3+\zeta^2-1&\zeta-\zeta^3&0\\
 0&0&1
 \end{pmatrix},\label{eq:u0}\\[3pt]
 v_0&=
 \begin{pmatrix}
 \zeta^3&0&0\\
 \zeta^3+\zeta^2-\zeta-1&1&0\\
 0&0&1
 \end{pmatrix},\label{eq:v0}\\[3pt]
 b_0&=
 \begin{pmatrix}
 1&0&0\\
 -2\zeta^3-\zeta^2+2\zeta+2&
 \zeta^3+\zeta^2-\zeta-1&-\zeta^3-\zeta^2\\
 \zeta^2+\zeta&-\zeta^3-1&-\zeta^3+\zeta+1
 \end{pmatrix}.\label{eq:b0}
\end{align}
These matrices preserve $F_0$.  Let $u,v,b$ be their projective classes in
$\mathrm{PU}(F_0)$, and put $j=(uv)^2$; as a matrix
$j=\operatorname{diag}(\zeta,\zeta,1)$.  By
\cite[Theorem~1]{CKY}, which reproduces a result of Cartwright and Steger
\cite{CS2}, the ambient arithmetic lattice has the presentation
\[
 \GammaMax=
 \left\langle u,v,b\ \middle|\
 \begin{array}{c}
 u^3=v^4=b^3=1,\quad (uv)^2=(vu)^2,\quad vb=bv,\\
 (buv)^3=1,\quad (buvu)^2v=1
 \end{array}
 \right\rangle .
\]

Define
\begin{align}
 a_1&=vuv^{-1}j^4buvj^2,\\
 a_2&=v^2ubuv^{-1}uv^2j,\\
 a_3&=u^{-1}v^2uj^9bv^{-1}uv^{-1}j^8.
\end{align}
Multiplying the matrices of \eqref{eq:u0}--\eqref{eq:b0} and normalizing the
product to have determinant one (which determines it up to a scalar cube root
of unity) shows that the three elements $a_1,a_2,a_3$ can be represented by
\begin{align}
 a_1&=
 \begin{pmatrix}
 \zeta^3+\zeta^2    &-\zeta+1                &1\\
 2\zeta^3+2\zeta^2-1&-\zeta^3-\zeta^2+\zeta+1&-\zeta^3+2\zeta+1\\
 \zeta^2+\zeta      &-\zeta^3                &-\zeta^3-\zeta^2+\zeta+1
 \end{pmatrix},\label{eq:a1-matrix}\\[3pt]
 a_2&=
 \begin{pmatrix}
 -\zeta-1        &-\zeta^3+\zeta^2&\zeta^3\\
 \zeta^3-2\zeta-2&\zeta           &\zeta^2+\zeta\\
 -\zeta-1        &0               &\zeta^3+\zeta^2
 \end{pmatrix},\label{eq:a2-matrix}\\[3pt]
 a_3&=
 \begin{pmatrix}
 -\zeta^2-\zeta   &1      &-\zeta^3-1\\
 -\zeta^2-2\zeta-1&\zeta+1&-\zeta^2-\zeta\\
 \zeta^2+\zeta    &-1     &\zeta^2+\zeta
 \end{pmatrix}.\label{eq:a3-matrix}
\end{align}
Each of these matrices preserves $F_0$.
We use the following:

\begin{theorem}[Cartwright--Koziarz--Yeung, \cite{CKY}]\label{thm:CKY-input}
Let $\PiCS=\langle a_1,a_2,a_3\rangle<\GammaMax$.
\begin{enumerate}[label=\textup{(\alph*)}]
\item $\PiCS$ is torsion free of index $864$, and
  $X=\PiCS\backslash\B$ is the Cartwright--Steger surface.  It is a smooth
  compact complex surface, and has
      Euler characteristic $\chi(X)=3$ and signature $\sgn(X)=1$.
\item There is an isomorphism $H_1(X;\Z)\cong\Z^2$ for which
      \begin{equation}\label{eq:H1-vectors}
      [a_1]=(1,3),\qquad [a_2]=(-2,1),\qquad [a_3]=(-1,-1).
      \end{equation}
      Here each $a_i$ denotes the corresponding element in the
      fundamental group, and $[a_i]$ is its image in the first
      homology.
\item The normalizer of $\PiCS$ in $\GammaMax$ is generated by $\PiCS$
  and $j^4$.  The automorphism $\sigma$ induced by $j^4$ generates
  $\Aut_{\mathrm{hol}}(X)\cong\Z/3\Z$, and in the 
  convention of \eqref{eq:H1-vectors} its action on $H_1(X; \Z )$ is given by
      \begin{equation}\label{eq:sigma-H1}
      \sigma_*=A:=\begin{pmatrix}0&1\\-1&-1\end{pmatrix}.
      \end{equation}
    \item The Albanese variety $\Alb(X)$ is the hexagonal elliptic curve $E$,
      and the map $\alpha _*\colon \pi_1(X)\to\pi_1(\Alb(X))$ induced by the Albanese map
      $\alpha\colon X\to E$ is onto.
\end{enumerate}
\end{theorem}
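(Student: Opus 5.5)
This result is quoted from \cite{CKY}, and the paper uses it as input. If I had to reprove it, I would reduce everything to finite computations in the presentation of $\GammaMax$, using \textsf{GAP} \cite{GAP4}, together with standard facts about ball quotients.

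\emph{Part (a).} First I will run a Todd--Coxeter coset enumeration of $\PiCS=\langle a_1,a_2,a_3\rangle$ in the finitely presented group $\GammaMax$. Here each $a_i$ is written as the word in $u,v,b$ given above, with $j=(uv)^2$. This should return $864$ cosets and a permutation representation $\rho\colon\GammaMax\to S_{864}$.

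For torsion freeness I would use that $\GammaMax$ acts cocompactly on $\B$. Hence it has finitely many conjugacy classes of maximal finite subgroups, namely the stabilizers of the vertices of a fundamental domain. From the relations, natural candidates are $\langle u,v\rangle\ni j$, $\langle v,b\rangle$, $\langle buv\rangle$, and so on; the precise list I would take from \cite{CS2}. An element $g\in\GammaMax$ has a conjugate in $\PiCS$ exactly when $\rho(g)$ fixes some coset. So it suffices to check that every nontrivial element of each representative finite subgroup acts on the $864$ cosets without fixed points.

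For the Euler characteristic, the orbifold Euler characteristic of $\GammaMax\backslash\B$ is $1/288$, by Prasad's volume formula as used in \cite{CS}. Since the index is $864$, this gives $\chi(X)=864/288=3$. Hirzebruch proportionality for compact ball quotients, $c_1^2=3c_2$, then gives $\sgn(X)=(c_1^2-2c_2)/3=c_2/3=1$. Smoothness and compactness follow from torsion freeness and cocompactness.

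\emph{Part (b).} I will compute the abelianization of $\PiCS$ by Reidemeister--Schreier rewriting from the coset table, for example with \texttt{AbelianInvariants} in \textsf{GAP}. This should give $\Z^2$. I then read off the images of $a_1,a_2,a_3$ and choose a basis in which they equal the vectors in \eqref{eq:H1-vectors}.

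\emph{Part (c).} By rigidity of ball quotients of general type, every holomorphic automorphism of $X$ lifts to an element of $\mathrm{PU}(F_0)$ normalizing $\PiCS$. Such an element normalizes a finite-index subgroup of $\GammaMax$, so it lies in the commensurator. I then need it to lie in $\GammaMax$ itself. For this I would use that $\GammaMax$ is a maximal arithmetic lattice, so the normalizer of any of its finite-index subgroups stays inside $\GammaMax$. Inside $\GammaMax$, the normalizer of $\PiCS$ corresponds to the cosets $\PiCS g$ with $g$ in the normalizer, and it can be computed directly from $\rho$ as \texttt{Normalizer}. I expect the result to be $\langle\PiCS,j^4\rangle$, containing $\PiCS$ with index $3$. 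Conjugating $a_1,a_2,a_3$ by $j^4$, rewriting in the abelianization, and comparing with \eqref{eq:H1-vectors} gives the matrix $A$.

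\emph{Part (d).} Since $b_1(X)=2$, the irregularity is $q=1$, so $\Alb(X)$ is an elliptic curve. By construction $H_1(X;\Z)/\mathrm{tors}\to H_1(\Alb(X);\Z)$ is an isomorphism. Because $\pi_1(E)$ is abelian, this already gives surjectivity of $\alpha_*$ on $\pi_1$. The automorphism $\sigma$ induces an automorphism of $\Alb(X)$ whose action on the lattice is $A$. The matrix $A$ has order $3$ and no eigenvalue $1$, so it fixes a point of $\Alb(X)$, and $\Alb(X)$ admits an automorphism of order $3$ fixing a point. Hence $\Alb(X)$ has $j$-invariant $0$, that is, $\Alb(X)\cong E$.

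\emph{Main obstacle.} I expect the hardest steps to be (i) justifying that the normalizer in all of $\mathrm{PU}(F_0)$, and not merely in $\GammaMax$, is $\langle\PiCS,j^4\rangle$, and (ii) certifying a complete list of finite subgroups of $\GammaMax$ for the torsion check. For both I would rely on the maximality and the explicit fundamental domain from \cite{CS2}.
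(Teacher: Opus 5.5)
The paper does not reprove this theorem. It quotes each part from \cite{CKY}: Theorem~2 for (a) and (b), \S1.4 for the normalizer and $\sigma_*$, Main Theorem~(f) for $\Aut_{\mathrm{hol}}(X)\cong\Z/3\Z$, and Lemma~11 with \S5.4 for the Albanese. Your computational reconstruction is a reasonable alternative for (a), (b), the normalizer of $\PiCS$ \emph{inside} $\GammaMax$, the matrix $A$, and (d). For (a) you still rely on \cite{CS2} for the finite subgroups and on Prasad's formula for $\chi^{\mathrm{orb}}(\GammaMax)=1/288$. Your argument for (d) is a clean, self-contained substitute for the period normalization: $\sigma$ acts on the Albanese lattice by the order-three matrix $A$, which has no eigenvalue $1$, so $\Alb(X)$ has $j$-invariant $0$. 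Surjectivity of $\alpha_*$ follows, as you say, from $H_1(X;\Z)/\mathrm{tors}\cong H_1(\Alb(X);\Z)$.

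There is a genuine gap in (c), at the step giving $\Aut_{\mathrm{hol}}(X)\cong\Z/3\Z$. Maximality of $\GammaMax$ does not force the normalizer in $\mathrm{PU}(F_0)$ of a finite-index subgroup to stay in $\GammaMax$. For example, $\mathrm{PSL}_2(\Z)$ is a maximal lattice in $\mathrm{PSL}_2(\R)$, yet $\Gamma_0(p)$ is normalized by the Fricke involution $\frac{1}{\sqrt p}\left(\begin{smallmatrix}0&-1\\p&0\end{smallmatrix}\right)\notin\mathrm{PSL}_2(\Z)$. In general, for $g$ normalizing $\PiCS$, the conjugate $g\GammaMax g^{-1}$ is again a maximal lattice containing $\PiCS$. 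Maximality says nothing about whether it equals $\GammaMax$.

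What you actually need is that $\GammaMax$ is the \emph{unique} maximal lattice containing $\PiCS$; then $g\in N(\GammaMax)=\GammaMax$. That is an arithmetic statement about $\PiCS$, in the Prasad--Yeung/Cartwright--Steger framework of parahoric data and covolumes, and it is the input \cite{CKY} provide. Neither the coset table nor a fundamental domain for $\GammaMax$ can detect elements outside $\GammaMax$.

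As it stands, your \textsf{GAP} computation only shows that $N_{\GammaMax}(\PiCS)/\PiCS\cong\Z/3\Z$ embeds in $\Aut_{\mathrm{hol}}(X)$, not that it is all of it. The paper needs equality in \cref{cor:BY-comparison}, where every antiholomorphic automorphism is written as $c_s\sigma^m$.
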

\begin{proof}
Parts (a) and (b) are \cite[Theorem~2]{CKY}.  The normalizer and its
action on homology are recorded in \cite[\S1.4]{CKY}, immediately
after that theorem, in the row-vector convention $f(j^4\pi
j^{-4})=f(\pi)\left(\begin{smallmatrix}0&-1\\1&-1\end{smallmatrix}\right)$;
  \eqref{eq:sigma-H1} is its transpose.  That
  $\Aut_{\mathrm{hol}}(X)\cong\Z/3\Z$ is part~(f) of the Main Theorem of
  \cite{CKY}.  For (d), the Albanese torus is identified in
  \cite[Lemma~11]{CKY}, and in \cite[\S5.4]{CKY} the period
  homomorphism is normalized to $(m,n)\mapsto m-n\omegaE$ (with
  $\omegaE=e^{2\pi i/3}$), which is an
  isomorphism onto $\Z+\omegaE\Z$.
\end{proof}

\subsection{An antiholomorphic involution}
Borisov and Yeung's rational model already supplies the involution
$c_{\mathrm{BY}}$.  We now construct an arithmetic representative of
an antiholomorphic involution and verify its properties directly.

The involution $s$ will be defined on $\B$ so that it
fixes the negative line $[0:0:1]\in \B\subset \CP ^2$ (which will serve as base point later on) and 
descends to an
antiholomorphic involution on $X=\PiCS\backslash\B$.
The first idea might be simple
complex conjugation, but that does not descend to $X$.
Consider the element
$ k=v^{-1}uv^{-1}u^{-1}v\in\GammaMax$ ;
in matrix terms, we get
\[
 v_0^{-1}u_0v_0^{-1}u_0^{-1}v_0=-k,
 \qquad\text{where}\qquad
 k=
 \begin{pmatrix}
 -1&-1+\zeta+\zeta^2-\zeta^3&0\\
 0&\zeta^3&0\\
 0&0&-1
 \end{pmatrix},
\]
and from now on $k$ denotes the displayed matrix.
(Recall that $\zeta$ is a primitive 12$^{th}$ root of unity.)
For a vector $w\in \C ^3$ representing a negative line $L\in \B$ we define
the map $s\colon \B\to \B$ by the formula
\[
 s(w)=k {\overline {w}}, 
\]
where $k$ acts by matrix
multiplication.

\begin{proposition}\label{prop:antiunitary}
The transformation $s$ is an antiholomorphic isometry of $\B$, satisfies
$s^2=1$, and fixes the negative line
$[0:0:1]\in\B$.
\end{proposition}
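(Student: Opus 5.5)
The plan is a direct verification from the definition $s([w])=[k\overline w]$, using only that $F_0$ has real entries and that $k$ preserves $F_0$.

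\emph{Step 1: $s$ is well defined and maps $\B$ to itself.} First I will check that $k\in\mathrm U(F_0)$. This follows because $-k=v_0^{-1}u_0v_0^{-1}u_0^{-1}v_0$ is a product of matrices preserving $F_0$, and the sign does not matter. Since $\overline{\lambda w}=\bar\lambda\,\overline w$, the map $w\mapsto k\overline w$ sends lines to lines. For the sign of the form, I use that $F_0$ is real symmetric, so $\overline{F_0}=F_0=F_0^T$. Then
\[
F_0(k\overline w,k\overline w)=w^T(k^*F_0k)\,\overline w
=w^TF_0\overline w=\overline{w^*F_0w}=w^*F_0w ,
\]
where the last equality holds because $w^*F_0w$ is real. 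Hence $s$ preserves negative, null and positive lines, and in particular maps $\B$ into $\B$.

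\emph{Step 2: $s$ is an antiholomorphic isometry.} I will write $s=P_k\circ\kappa$. Here $\kappa$ is coordinatewise conjugation on $\CP^2$, which is antiholomorphic, and $P_k\in\mathrm{PU}(F_0)$ is the projective action of $k$, which is holomorphic. The map $\kappa$ preserves $\B$ because $F_0$ is real. It is an isometry of the Bergman metric, since that metric is built from $F_0$ alone (for example via the distance formula $\cosh^2 d=\frac{|F_0(v,w)|^2}{F_0(v,v)F_0(w,w)}$), and $F_0(\overline v,\overline w)=\overline{F_0(v,w)}$ leaves this expression unchanged. The map $P_k$ is an isometry because $k$ preserves $F_0$. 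The composite is therefore an antiholomorphic isometry.

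\emph{Step 3: $s^2=1$.} Since $s^2([w])=[k\bar k\,w]$, it suffices to show that $k\bar k$ is scalar. The matrix $k$ is upper triangular with diagonal $(-1,\zeta^3,-1)$, so $k\bar k$ has diagonal entries $1$, $|\zeta^3|^2=1$ and $1$. The only nontrivial entry is the $(1,2)$ entry. With $x=-1+\zeta+\zeta^2-\zeta^3$ it equals $-\bar x+x\zeta^{-3}$. This step is the only genuine computation, and I expect it to be the main (though modest) obstacle. I would reduce everything using $\zeta^3=i$, $\zeta^{-1}=-i\zeta^2$ and $\zeta^{-2}=-i\zeta$. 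These give
\[
\bar x=-1-i\zeta^2-i\zeta+i \qquad\text{and}\qquad x\zeta^{-3}=-ix=i-i\zeta-i\zeta^2-1 ,
\]
which coincide, so the $(1,2)$ entry vanishes. Hence $k\bar k=\mathrm{Id}_3$, and $s^2=1$.

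\emph{Step 4: fixed point.} Finally, $k\,\overline{(0,0,1)^T}=(0,0,-1)^T$, which spans the same line. So $s$ fixes $[0:0:1]$, and this point lies in $\B$ because $F_0(e_3,e_3)=-1<0$.
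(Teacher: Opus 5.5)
Your proposal is correct and follows the paper's proof essentially step for step. You get $k\in\mathrm U(F_0)$ from the $u_0,v_0$ factorization and combine it with the reality of $F_0$ to obtain the antiunitary property. The only nontrivial entry of $k\bar k$ vanishes because $\bar q=q\zeta^{-3}$, and $k e_3=-e_3$ gives the fixed point. Your Step 2, which splits $s=P_k\circ\kappa$ and invokes the distance formula, just spells out the isometry claim that the paper states more tersely.
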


\begin{proof}
As $k$ is $\mathbb C$-linear and conjugation is anti-$\mathbb
C$-linear, we get that $s$ is anti-$\mathbb C$-linear.  The identities
\[
 u_0^*F_0u_0=v_0^*F_0v_0=b_0^*F_0b_0=F_0
\]
imply $k^*F_0k=F_0$, and ${\overline {F_0}}=F_0$ since entries of
$F_0$ are all real.  Therefore $s$ preserves the Hermitian form in the
antiunitary sense and sends negative lines to negative lines.  Its
projectivization on $\B$ is consequently antiholomorphic.

It remains to check that it is an involution.  If
$q=-1+\zeta+\zeta^2-\zeta^3$, then
$\bar q=\zeta-\zeta^2=q\zeta^{-3}$, and matrix multiplication gives
\[
 k{\overline {k}}=
 \begin{pmatrix}
 1&-\bar q+q\zeta^{-3}&0\\0&1&0\\0&0&1
 \end{pmatrix}=I.
\]
Thus $s^2=1$ as claimed.  Finally,
$k(0,0,1)^T=-(0,0,1)^T$ and conjugation fixes $(0,0,1)^T$, whose entries are all
real.  The line spanned by this vector is therefore
fixed by $s$.
\end{proof}
Next we will show that $s$ descends to an involution on $X$. For this
we need to show that for each group element $g\in \GammaMax$ there is
$\phiA (g)$ so that if $g$ takes $w\in \B$ to  $w'\in \B$ then
$\phiA (g)$ takes $s(w)$ to $s(w')$. The formula 
\begin{equation}\label{eq:phi-def}
 \phiA(g)=sgs^{-1}
\end{equation}
will exactly do that --- indeed, as $s$ is an involution, $s^{-1}=s$.

\begin{proposition}\label{prop:phi-words}
The following identities hold:
\begin{align}
 \phiA(a_1)&=a_1^{-1}a_2^{-1}a_1^{-1}a_3^{-3}a_2^3,
 \label{eq:phi-a1}\\
 \phiA(a_2)&=a_1^{-1}a_2^{-1}a_1a_2^2
 a_1^{-1}a_2^{-1}a_1a_3^{-1}a_1^{-1}a_2a_1,
 \label{eq:phi-a2}\\
 \phiA(a_3)&=a_3^{-1}.
 \label{eq:phi-a3}
\end{align}
Consequently $\phiA(\PiCS)=\PiCS$, and $s$ descends to an
antiholomorphic involution $c_s$ on $X$.
\end{proposition}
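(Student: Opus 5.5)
The proof is a direct matrix verification, followed by a formal group-theoretic consequence.

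Since $s(w)=k\overline w$ and $s^{-1}=s$, for $g\in\GammaMax$ represented by a matrix $g_0$ we have
\[
 \phiA(g)(w)=s\bigl(g_0\,\overline{k\overline w}\bigr)=k\,\overline{g_0}\,\overline{k}\,w .
\]
Because $k\overline k=I$ (shown in the proof of \cref{prop:antiunitary}), this gives $\phiA(g)=k\,\overline{g_0}\,k^{-1}$ as a projective class, so $\phiA(g)$ is a projective unitary transformation preserving $F_0$.

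To establish \eqref{eq:phi-a1}--\eqref{eq:phi-a3}, I would do the following for each $i=1,2,3$:
\begin{enumerate}
\item Compute $k\,\overline{a_i}\,k^{-1}$ using the explicit matrices \eqref{eq:a1-matrix}--\eqref{eq:a3-matrix}. Complex conjugation acts on $\Z[\zeta]$ by $\zeta\mapsto\zeta^{-1}=\zeta^{11}$, which reduces via $\zeta^4=\zeta^2-1$ to a $\Z$-linear combination of $1,\zeta,\zeta^2,\zeta^3$.
\item Multiply out the matrix word on the right-hand side.
\item Check that the two matrices agree up to a scalar of modulus one. Since both sides have determinant one, that scalar must be a cube root of unity.
\end{enumerate}
The relation \eqref{eq:phi-a3} is the cheapest: it only requires $k\overline{a_3}k^{-1}\,a_3\in\{\lambda I\}$. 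The other two are longer products of $3\times 3$ matrices over $\Z[\zeta]$. This arithmetic is routine but error-prone, so it is best done by machine (GAP or any computer algebra system over $\Q(\zeta)$). I expect this bookkeeping to be the main, though purely computational, obstacle.

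A conceptual cross-check is available. Conjugation by $s$ should normalize $\GammaMax$, since $k\in\GammaMax$ and entrywise conjugation preserves the arithmetic structure. One can also check the identities on $H_1$. Using \eqref{eq:H1-vectors}:
\begin{itemize}
\item \eqref{eq:phi-a3} gives $[\phiA(a_3)]=(1,1)$;
\item \eqref{eq:phi-a1} gives $-(1,3)-(-2,1)-(1,3)+3(1,1)+3(-2,1)=(-3,-3)$;
\item \eqref{eq:phi-a2} gives $-[a_3]+[a_2]=(-1,2)$.
\end{itemize}
The matrix sending $(1,3)\mapsto(-3,-3)$ and $(-2,1)\mapsto(-1,2)$ also sends $(-1,-1)\mapsto(1,1)$, consistent with $\phiA$ inducing an orientation-reversing automorphism of $\Z^2$ of order two. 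This detects typos but is not a proof.

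Given the identities, the right-hand sides lie in $\PiCS=\langle a_1,a_2,a_3\rangle$, so $\phiA(\PiCS)\subseteq\PiCS$. Since $s^2=1$, $\phiA$ is an involution of $\mathrm{PU}(F_0)$, and applying it again gives $\PiCS\subseteq\phiA(\PiCS)$, hence equality.

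Consequently, for $w\in\B$ and $\pi\in\PiCS$ we have $s(\pi w)=\phiA(\pi)s(w)$ with $\phiA(\pi)\in\PiCS$. So $s$ maps $\PiCS$-orbits to $\PiCS$-orbits and induces a well-defined map $c_s$ on $X=\PiCS\backslash\B$.

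Finally, $c_s$ has the required properties:
\begin{itemize}
\item $c_s^2=1$, because $s^2=1$;
\item $c_s$ is antiholomorphic, because $s$ is (by \cref{prop:antiunitary}) and the projection $\B\to X$ is a local biholomorphism, $\PiCS$ being torsion free by \cref{thm:CKY-input}(a).
\end{itemize}
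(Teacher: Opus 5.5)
Your proposal is correct and follows the paper's proof: evaluate $k\,\overline{a_i}\,k^{-1}$ and the right-hand word as matrices over $\Z[\zeta]$, check that they agree up to a cube root of unity (the paper records the scalars $\zeta^8,\zeta^8,\zeta^0$), and then use $\phiA^2=\id$ to upgrade $\phiA(\PiCS)\subseteq\PiCS$ to equality, after which the descent is formal. Two slips outside the logical core. First, the intermediate expression should read $s(g_0k\overline w)$, not $s(g_0\overline{k\overline w})$, although your final formula $k\,\overline{g_0}\,\overline{k}\,w$ is right. Second, in the $H_1$ cross-check the exponent sums of \eqref{eq:phi-a1} give $(-3,-1)$, not $(-3,-3)$; with $(-3,-3)$ the consistency you assert would in fact fail, whereas $(-3,-1)$ gives exactly $M=\left(\begin{smallmatrix}0&-1\\-1&0\end{smallmatrix}\right)$.
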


\begin{proof}
Using \eqref{eq:a1-matrix}--\eqref{eq:a3-matrix} and
\eqref{eq:phi-def}, both sides of each displayed identity are
evaluated in $\operatorname{Mat}_3(\Z[\zeta ])$ as matrices. The two
sides agree entry by entry after multiplication by an appropriate
$\zeta$-power (which is $\zeta ^8, \zeta ^8, \zeta ^0$, respectively);
hence the identities hold in $\GammaMax$.  The identities give
$\phiA(\PiCS)\subseteq\PiCS$; since $s^2=1$ and so $\phiA^2=\id$,
equality follows.
\end{proof}

\begin{proposition}\label{prop:S3}
The projective transformation $j^4$ has order three and
\begin{equation}\label{eq:S3}
 sj^4s^{-1}=j^{-4}.
\end{equation}
Thus $c_s$ and $\sigma$ generate an $S_3$-action on $X$.  The three elements
$c_s\sigma^m$, $m=0,1,2$, are antiholomorphic involutions and are conjugate by
powers of $\sigma$.
\end{proposition}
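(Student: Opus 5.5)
The plan is a direct matrix computation in $\mathrm{PU}(F_0)$, followed by formal consequences using \cref{thm:CKY-input}(c) and \cref{prop:phi-words}.

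\textbf{Step 1: order of $j^4$.} As a matrix, $j=\operatorname{diag}(\zeta,\zeta,1)$, so $j^4=\operatorname{diag}(\zeta^4,\zeta^4,1)$ with $\zeta^4=e^{2\pi i/3}$ a primitive cube root of unity. Hence $(j^4)^3=I$. Moreover $j^4$ and $(j^4)^2$ are not scalar, since the last diagonal entry is $1$ while the others are not. So $j^4$ has order exactly three in $\mathrm{PU}(F_0)$.

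\textbf{Step 2: the relation \eqref{eq:S3}.} For $w\in\C^3$,
\[
s j^4 s^{-1}(w)=k\,\overline{j^4\,k\,\overline w}=k\,\overline{j^4}\,\overline k\,w .
\]
By \cref{prop:antiunitary}, $k\overline k=I$, so $\overline k=k^{-1}$. The claim therefore reduces to checking that $k\,j^{-4}\,k^{-1}$ equals $j^{-4}$ up to a scalar, using $\overline{j^4}=j^{-4}$. Write $k$ as a block matrix: an upper triangular $2\times 2$ block acting on the first two coordinates, plus the entry $-1$ in position $(3,3)$. The matrix $j^{-4}$ is the scalar $\zeta^{-4}$ on the first two coordinates and $1$ on the third. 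Hence $j^{-4}$ commutes with $k$ exactly, and \eqref{eq:S3} holds on the nose. This is the only real computation, and it is immediate from the block shape of $k$.

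\textbf{Step 3: descent to $X$.} By \cref{thm:CKY-input}(c), $j^4$ normalizes $\PiCS$ and induces $\sigma$. By \cref{prop:phi-words}, $s$ descends to $c_s$. Relation \eqref{eq:S3} descends to $c_s\sigma c_s=\sigma^{-1}$ on $X$. Together with $c_s^2=1$ and $\sigma^3=1$, this gives a homomorphism from $S_3=\langle \sigma,c\mid \sigma^3=c^2=1,\ c\sigma c=\sigma^{-1}\rangle$ onto the group generated by $c_s$ and $\sigma$.

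To see that this homomorphism is injective, I would argue as follows. The map $c_s$ is antiholomorphic, so it is not a power of $\sigma$ and is nontrivial. Also $\sigma$ has order $3$ by \cref{thm:CKY-input}(c). So the image has order divisible by $6$, and the action is faithful.

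\textbf{Step 4: the three reflections.} Each element $c_s\sigma^m$ is antiholomorphic, being the composite of an antiholomorphic map with a holomorphic one. It is an involution because
\[
(c_s\sigma^m)^2=c_s\sigma^m c_s\sigma^m=\sigma^{-m}\sigma^m=1,
\]
and it is nontrivial. For conjugacy, compute
\[
\sigma^{n}c_s\sigma^{-n}=c_s\,(c_s\sigma^n c_s)\,\sigma^{-n}=c_s\sigma^{-n}\sigma^{-n}=c_s\sigma^{-2n}.
\]
As $n$ ranges over $\Z/3$, the exponent $-2n\equiv n \pmod 3$ ranges over all residues. So every $c_s\sigma^m$ is conjugate to $c_s$ by a power of $\sigma$.
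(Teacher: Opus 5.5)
Your proposal is correct and follows the paper's argument: a direct matrix check that $j^4=\operatorname{diag}(\zeta^4,\zeta^4,1)$ has order three and that $sj^4s^{-1}=kj^{-4}k^{-1}=j^{-4}$ (your observation that $k$ is block diagonal with a $2\times2$ block and a $1\times1$ block is exactly what the paper's ``matrix multiplication'' amounts to), followed by the same formal consequences $(c_s\sigma^m)^2=1$ and $\sigma^n c_s\sigma^{-n}=c_s\sigma^{-2n}$. Your extra Lagrange argument showing that the $S_3$-action is faithful is a harmless elaboration that the paper leaves implicit.
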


\begin{proof}
Recall that $j=(uv)^2$ in matrix form is represented by 
$j=\operatorname{diag}(\zeta,\zeta,1)$.  Therefore matrix multiplication gives
$(j^4)^3=1$, $j^4\ne1$, and $sj^4s^{-1}=j^{-4}$.  Hence
$c_s^2=\sigma^3=1$ and $c_s\sigma c_s=\sigma^{-1}$.  It follows that
$(c_s\sigma^m)^2=1$ and
\[
 \sigma^n c_s\sigma^{-n}=c_s\sigma^{-2n}.
\]
As $n$ runs through $\mathbb Z/3\Z$, each of the three reflections
occurs; therefore, they are conjugate to one another.
\end{proof}

\begin{corollary}\label{cor:BY-comparison}
Every antiholomorphic automorphism of $X$ is of the form $c_s\sigma^m$ for a
unique $m\in\Z/3\Z$.  In particular,
\begin{equation}\label{eq:BY-comparison}
 c_{\mathrm{BY}}=c_s\sigma^m
\end{equation}
for some $m$.  Thus the Borisov--Yeung involution and the arithmetic
involution used here are the same modulo the holomorphic $\Z/3\Z$-action,
and their orbit spaces are diffeomorphic.
\end{corollary}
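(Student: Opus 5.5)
The argument is the standard fact that the antiholomorphic automorphisms of $X$ form a single coset of $\Aut_{\mathrm{hol}}(X)$ inside the group $\Aut(X)$ of all holomorphic and antiholomorphic automorphisms. Let $\tau$ be any antiholomorphic automorphism of $X$. We already have one such map, $c_s$, from \cref{prop:phi-words}. The composite $\tau c_s$ is a composition of two antiholomorphic maps, so it is holomorphic. By \cref{thm:CKY-input}(c) it is therefore $\sigma^n$ for some $n\in\Z/3\Z$. Since $c_s^2=1$, this gives $\tau=\sigma^n c_s$. The relation $c_s\sigma c_s=\sigma^{-1}$ from \cref{prop:S3} then yields $\sigma^n c_s=c_s\sigma^{-n}$, so $\tau=c_s\sigma^m$ with $m=-n$.

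For uniqueness, suppose $c_s\sigma^m=c_s\sigma^{m'}$. Cancelling $c_s$ gives $\sigma^m=\sigma^{m'}$. Because $\sigma$ has order exactly three (\cref{prop:S3}), this forces $m=m'$ in $\Z/3\Z$.

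Applying this to $c_{\mathrm{BY}}$ gives \eqref{eq:BY-comparison}. I should check that $c_{\mathrm{BY}}$, which Borisov--Yeung define on their bicanonical model, really is an antiholomorphic automorphism of the surface $X=\PiCS\backslash\B$. Their model is the bicanonical image of $X$, so the two surfaces are biholomorphic. Transporting $c_{\mathrm{BY}}$ along a biholomorphism gives an antiholomorphic involution of $X$. A different choice of biholomorphism changes the transported map by conjugation with an element of $\Aut_{\mathrm{hol}}(X)$, and this does not affect the conclusion.

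For the final claim, \cref{prop:S3} says that the three involutions $c_s\sigma^m$ are conjugate by powers of $\sigma$: explicitly, $\sigma^n c_s\sigma^{-n}=c_s\sigma^{-2n}$. Suppose $\tau'=\sigma^n\tau\sigma^{-n}$. Then $\sigma^n$ maps $\tau$-orbits onto $\tau'$-orbits, so it induces a homeomorphism $X/\langle\tau\rangle\to X/\langle\tau'\rangle$.

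To upgrade this to a diffeomorphism, I would use the smooth structure that the later sections put on the quotient near the fixed locus. Any smooth structure on $X/\langle\tau\rangle$ that is built naturally from the smooth involution is carried to the corresponding structure on $X/\langle\tau'\rangle$ by an equivariant diffeomorphism, and $\sigma^n$ is such a map. So the orbit spaces are diffeomorphic, and in particular $X/\langle c_{\mathrm{BY}}\rangle\cong X/\langle c_s\rangle$.

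No step is a real obstacle. The only subtle point is the compatibility just mentioned, namely that the smooth structure on the quotient is natural with respect to equivariant diffeomorphisms. Identifying $c_{\mathrm{BY}}$ as an automorphism of the ball-quotient model is handled by the biholomorphism argument above.
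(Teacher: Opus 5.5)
Your argument is correct and is essentially the paper's: compose with $c_s$ to land in $\Aut_{\mathrm{hol}}(X)=\langle\sigma\rangle$, then use that the three reflections are conjugate by powers of $\sigma$. The differences are minor: you form $\tau c_s$ and move $\sigma^n$ across via $c_s\sigma c_s=\sigma^{-1}$, where the paper takes $c_s^{-1}d$ and reads off $d=c_s\sigma^m$ directly; you also spell out uniqueness, the transport of $c_{\mathrm{BY}}$ to the ball-quotient model, and the naturality of the quotient smooth structure, which the paper leaves implicit. (For uniqueness, $\sigma$ having order exactly three on $X$ comes from \cref{thm:CKY-input}(c), since \cref{prop:S3} only gives $\sigma^3=1$ and the order of $j^4$ on the ball.)
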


\begin{proof}
If $d$ is antiholomorphic, then $c_s^{-1}d$ is holomorphic.  By
\cref{thm:CKY-input}, it equals $\sigma^m$ for some $m$, so
$d=c_s\sigma^m$.  Applying this to $d=c_{\mathrm{BY}}$ gives
\eqref{eq:BY-comparison}.  Finally, conjugation by an appropriate
power of $\sigma$ carries one reflection to any other and therefore
induces a diffeomorphism of the corresponding orbit spaces.
\end{proof}

From now on we work with the arithmetic reflection $c=c_s$.

\begin{proposition}\label{prop:H1-action}
  In the coordinates \eqref{eq:H1-vectors}, the reflection $c$ acts on
  $H_1(X; \Z)$ by
\begin{equation}\label{eq:M}
 c_*=M:=\begin{pmatrix}0&-1\\-1&0\end{pmatrix}.
\end{equation}
\end{proposition}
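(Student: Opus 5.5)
The plan is to compute $c_*$ directly from the identities of \cref{prop:phi-words}. The map on $\pi_1(X)=\PiCS$ induced by $c$ is $\phiA\colon g\mapsto sgs^{-1}$, up to an inner automorphism. The base point $[0:0:1]$ is fixed by $s$, so with that base point the conjugation is exactly $\phiA$. Inner automorphisms act trivially on $H_1$ in any case. Hence $c_*$ on $H_1(X;\Z)$ is the abelianization of $\phiA$. Write $[g]\in\Z^2$ for the class of $g\in\PiCS$ in the coordinates \eqref{eq:H1-vectors}, with $[a_1]=(1,3)$, $[a_2]=(-2,1)$ and $[a_3]=(-1,-1)$.

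Abelianize each identity by adding the classes of the letters with their exponents.
\begin{itemize}
\item From \eqref{eq:phi-a3}, $c_*[a_3]=-[a_3]=(1,1)$.
\item In \eqref{eq:phi-a1} the exponent sums are $-2$ for $a_1$, $+2$ for $a_2$ and $-3$ for $a_3$. So $c_*[a_1]=-2(1,3)+2(-2,1)-3(-1,-1)=(-3,-1)$.
\item In \eqref{eq:phi-a2} the exponent sums are $0$ for $a_1$, $+1$ for $a_2$ and $-1$ for $a_3$. So $c_*[a_2]=(-2,1)-(-1,-1)=(-1,2)$.
\end{itemize}

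Now compare with $M$. One has $M(1,3)^T=(-3,-1)$, $M(-2,1)^T=(-1,2)$ and $M(-1,-1)^T=(1,1)$, so all three agree. The vectors $[a_1]$ and $[a_2]$ already span $\Z^2$, since their determinant is $1\cdot1-3\cdot(-2)=7\neq0$. More to the point, $[a_2]-[a_3]$ and $[a_3]$ are $(-1,2)$ and $(-1,-1)$, and together with $[a_1]$ the three classes generate $\Z^2$ because $H_1(X;\Z)$ is the image of $\PiCS=\langle a_1,a_2,a_3\rangle$. A linear map on $\Z^2$ is therefore determined by its values on these generators, which gives $c_*=M$.

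As consistency checks, $M^2=I$, as it must be since $c$ is an involution. Also $MAM=A^{-1}$ with $A$ from \eqref{eq:sigma-H1}, matching $c\sigma c=\sigma^{-1}$ from \cref{prop:S3}.

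The main point needing care is the base-point and conjugation convention: one must confirm that the induced map on $\pi_1$ is $\phiA$ rather than its inverse or a twisted version. This is harmless here for two reasons. Inner automorphisms die in $H_1$, and $\phiA=\phiA^{-1}$ because $s^2=1$. The rest is routine exponent-sum arithmetic.
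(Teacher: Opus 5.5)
Your proposal is correct and is the same argument as the paper: abelianize the identities of \cref{prop:phi-words} by exponent sums to get $c_*[a_1]=(-3,-1)$, $c_*[a_2]=(-1,2)$ and $c_*[a_3]=(1,1)$, then read off $M$. One small slip: a determinant of $7$ means $[a_1],[a_2]$ span an index-$7$ sublattice, not $\Z^2$; this is harmless, since linear independence already determines a linear map on the torsion-free group $\Z^2$, and the three classes do generate $H_1(X;\Z)$, as you also note.
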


\begin{proof}
The exponent sums of \eqref{eq:phi-a1} are $(-2,2,-3)$, so its homology class
is
\[
 -2(1,3)+2(-2,1)-3(-1,-1)=(-3,-1).
\]
For \eqref{eq:phi-a2} the exponent sums are $(0,1,-1)$, giving $(-1,2)$,
and \eqref{eq:phi-a3} gives $(1,1)$.  These images determine the matrix 
$M$.
\end{proof}

Let $x_0\in X$ be the image of the negative line $[0:0:1]$, which lies in
$\B$. By \cref{prop:antiunitary},
it is fixed by $s$, 
and $j^4=\operatorname{diag}(\omegaE,\omegaE,1)$ also fixes the same line;
hence $x_0$ is fixed by $c$ and by $\sigma$.

\begin{proposition}\label{prop:real-Albanese}
There is an identification
\[
 E\cong\C/(\Z+\omegaE\Z)
\]
for which
\[
 \sigma_E(z)=\omegaE z,
 \qquad
 c_E(z)=\bar z,
\]
and $\alpha$ is equivariant for both actions.
\end{proposition}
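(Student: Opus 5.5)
Our plan is to use the universal property of the Albanese map. We fix the base point $x_0$, which is fixed by both $c$ and $\sigma$, and normalize $\alpha$ so that $\alpha(x_0)=0$. Any holomorphic map $X\to E$ is then a translate of a homomorphism composed with $\alpha$. Since $\sigma$ is holomorphic, it induces a holomorphic group automorphism $\sigma_E$ of $\Alb(X)=E$ with $\alpha\circ\sigma=\sigma_E\circ\alpha$; no translation term appears because $\alpha(\sigma(x_0))=\alpha(x_0)=0$.

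For the antiholomorphic $c$ we apply the same universal property to the conjugate structure. The map $X\to\overline{E}$ given by $x\mapsto\alpha(c(x))$ is holomorphic when $X$ carries its usual structure. Equivalently, $\bar\alpha\circ c$ with $\bar\alpha$ a holomorphic Albanese of $\overline X$ factors through $\Alb$. This produces an antiholomorphic group automorphism $c_E$ of $E$ with $\alpha\circ c=c_E\circ\alpha$, again with no translation since $c(x_0)=x_0$.

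Both $\sigma_E$ and $c_E$ are real-linear maps on $\C$ preserving the lattice $\Lambda=\Z+\omegaE\Z$. Since $\alpha_*$ is onto $\pi_1(E)=\Lambda$ by \cref{thm:CKY-input}(d), and $H_1(X;\Z)\cong\Z^2$, the induced map $H_1(X)\to\Lambda$ is an isomorphism. With the normalization $(m,n)\mapsto m-n\omegaE$ from the proof of \cref{thm:CKY-input}, $\sigma_E$ and $c_E$ are the linear maps whose lattice matrices are $A$ and $M$ from \eqref{eq:sigma-H1} and \eqref{eq:M}.

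It remains to compute these maps directly.
\begin{itemize}
\item Under $A$, $(1,0)\mapsto(0,-1)$ and $(0,1)\mapsto(1,-1)$. So $1\mapsto\omegaE$ and $-\omegaE\mapsto 1+\omegaE=-\omegaE^2$. The first gives multiplication by $\omegaE$, and the second is consistent with it since $-\omegaE\cdot\omegaE=-\omegaE^2$.
\item Under $M$, $(1,0)\mapsto(0,-1)$ and $(0,1)\mapsto(-1,0)$. So $1\mapsto\omegaE$ and $-\omegaE\mapsto-1$.
\end{itemize}
Thus $c_E$ is the real-linear map $z\mapsto\omegaE\bar z$: one checks $1\mapsto\omegaE$, and $-\omegaE\mapsto\omegaE\cdot(-\omegaE^2)=-1$.

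This is a reflection but not yet $z\mapsto\bar z$. We fix it by changing the identification $E\cong\C/\Lambda$ by a lattice automorphism commuting with $\sigma_E$, namely multiplication by a unit $\mu\in\Z[\omegaE]^\times$. In the coordinate $w=\mu z$ the map becomes $w\mapsto\mu\omegaE\bar\mu^{-1}\bar w=\mu^2\omegaE\bar w$. Choosing $\mu=\omegaE$ gives $\mu^2\omegaE=\omegaE^3=1$, so $c_E(w)=\bar w$. Multiplication by $\mu$ commutes with $\sigma_E$, so $\sigma_E$ remains $w\mapsto\omegaE w$. Replacing $\alpha$ by $\mu\alpha$ keeps equivariance.

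The main obstacle is the bookkeeping of conventions. We must check that the coordinates of \eqref{eq:H1-vectors} agree with the period normalization $(m,n)\mapsto m-n\omegaE$, including the transpose issue noted in \cref{thm:CKY-input}. We must also confirm that $c_E$ has no translational part; this follows from the common fixed point $x_0$. Even if the conventions come out differently, the argument is robust: any antiholomorphic lattice involution $z\mapsto\varepsilon\bar z$ with $\varepsilon$ a unit that reverses $\sigma_E$ can be conjugated to $\bar z$ by a unit, provided $\varepsilon$ is a square times $\pm1$. All sixth roots of unity are of this form, since $-1$ and $\omegaE$ generate the group of sixth roots of unity.
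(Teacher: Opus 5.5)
Your proof is correct and follows the paper's route: Albanese functoriality (with $c$ viewed as a holomorphic map into the conjugate structure), no translation term because of the common fixed point $x_0$, $\alpha_*$ an isomorphism on $H_1$, and a change of lattice coordinates; your final coordinate $w=\omegaE z$, i.e.\ $(m,n)\mapsto n+(m+n)\omegaE$, is exactly the paper's basis change by $P$, and starting from the CKY period normalization makes explicit why this identification is holomorphic. The one flaw is your closing ``robustness'' aside: conjugating $z\mapsto\varepsilon\bar z$ by a unit $\mu$ multiplies $\varepsilon$ by $\mu^2\in\{1,\omegaE,\omegaE^2\}$, so a value such as $\varepsilon=-1$ could not be removed this way (the claim that every sixth root of unity works is false), but this does not affect the proof because you actually obtain $\varepsilon=\omegaE$.
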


\begin{proof}
Albanese functoriality gives a holomorphic automorphism $\sigma_E$ of
$E$ with $\alpha\sigma=\sigma_E\alpha$.  For $c$ one applies the same
functoriality, where $c$ is viewed as a holomorphic map $X\to\overline
X$, which yields an antiholomorphic automorphism $c_E$ of $E$ with
$\alpha c=c_E\alpha$.  Since $x_0$ is fixed by $\sigma$ and by $c$ and
$\alpha(x_0)=0$, both $\sigma_E$ and $c_E$ fix $0$ and are therefore
group automorphisms, with no translation term.  By
\cref{thm:CKY-input}\textup{(d)} the map $\alpha_*\colon H_1(X;\Z)\to
H_1(E;\Z)$ is a surjection between free abelian groups of rank two,
hence an isomorphism, and it intertwines the two actions.  So
$\sigma_E$ and $c_E$ are given on $H_1(E;\Z)$ by the matrices $A$ of
\eqref{eq:sigma-H1} and $M$ of \eqref{eq:M}; in the integral basis
determined by
\[
 P=\begin{pmatrix}-1&1\\1&0\end{pmatrix}
\]
these become the matrices $P^{-1}AP$ and $P^{-1}MP$.
A direct calculation gives that $ P^{-1}AP$ is multiplication by
$\omegaE$ in the lattice basis $(1,\omegaE)$, while $ P^{-1}MP$ is
complex conjugation, since $\bar\omegaE=-1-\omegaE$.
\end{proof}

\section{The computation of the fundamental group}
\label{sec:coequalizer}

Throughout this section $Y=X/\langle c\rangle\cong\Gamma_{\mathbb
  R}\backslash \B$, where
\begin{equation}\label{eq:GammaR}
 \Gamma_{\mathbb R}=\PiCS\rtimes_{\phiA}\langle s\mid s^2=1\rangle
\end{equation}
is the group acting on the ball; the semidirect product makes sense
as by \cref{prop:phi-words}
$s$ normalizes $\PiCS$. 

\subsection{The Albanese lower bound}
\label{sec:lower-bound}

We first show that there is a surjection $\pi_1(Y)\to\Z$.
Write a point of $E=\C/(\Z+\omegaE\Z)$ as $x+y\omegaE$, with
$(x,y)\in\mathbb R^2/\Z^2$.  Complex conjugation acts by
\[
 c_E(x,y)=(x-y,-y).
\]
Its fixed locus is the single circle $y=0$, so the quotient
$\mathcal M=E/\langle c_E\rangle$ is the M\"obius band, with the factorization
map $q_E\colon E\to {\mathcal {M}}$.

Let $e_1,e_2$ be the lattice loops represented by $1$ and $\omegaE$, and let
$t_{\mathcal M}$ be the core of $\mathcal M$.  The loop $e_1$ becomes the
boundary of the M\"obius band, so, after orienting the core,
\[
 (q_E)_*(e_1)=2t_{\mathcal M}.
\]
Equivariance and $(c_E)_*(e_2)=-e_1-e_2$ imply
$(q_E)_*(e_2)=-t_{\mathcal M}$.  Consequently
\begin{equation}\label{eq:qE-surj}
 (q_E)_*\colon \pi_1(E)=\Z^2\twoheadrightarrow\pi_1(\mathcal M)=\Z
\end{equation}
is surjective.

By \cref{prop:real-Albanese}, the Albanese map descends to a commutative
diagram
\begin{equation}\label{eq:diagram}
\begin{tikzcd}[column sep=large,row sep=large]
 X \arrow[r,"\alpha"] \arrow[d,"q_X"'] &
 E \arrow[d,"q_E"]\\
 Y \arrow[r,"\bar\alpha"'] & \mathcal M.
\end{tikzcd}
\end{equation}
Since $\alpha_*\colon \pi_1(X)\twoheadrightarrow\pi_1(E)$ is surjective by
\cref{thm:CKY-input}, equations \eqref{eq:qE-surj} and \eqref{eq:diagram}
give the following:

\begin{proposition}\label{prop:lower}
The descended Albanese map induces a surjection
\[
 \bar\alpha_*\colon \pi_1(Y)\twoheadrightarrow\pi_1(\mathcal M)=\Z.
\]
\end{proposition}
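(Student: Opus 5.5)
The surjectivity follows from the commutative square \eqref{eq:diagram} together with a check on base points. Take $x_0\in X$ as base point; it is fixed by $c$, and $\alpha(x_0)=0$. Take $y_0=q_X(x_0)$ in $Y$ and $m_0=q_E(0)$ in $\mathcal M$, so that every map in the square is a based map. Functoriality of $\pi_1$ then gives
\[
 \bar\alpha_*\circ (q_X)_*=(q_E)_*\circ\alpha_*\colon \pi_1(X,x_0)\to\pi_1(\mathcal M,m_0).
\]
The right-hand side is a composite of two surjections. The first is $\alpha_*$, which is onto by \cref{thm:CKY-input}(d). The second is $(q_E)_*$, which is onto by \eqref{eq:qE-surj}. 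Hence the left-hand side is onto, and therefore $\bar\alpha_*$ is onto. Note that this argument never needs $(q_X)_*$ to be surjective.

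It remains to justify \eqref{eq:qE-surj}. Since $\pi_1(\mathcal M)\cong\Z$ is abelian, it suffices to work in homology and check that the images of $e_1$ and $e_2$ generate. The fixed circle $y=0$ of $c_E$ becomes the boundary of $\mathcal M$, which wraps twice around the core, so $(q_E)_*(e_1)=\pm 2t_{\mathcal M}$. For $e_2$, take the segment from $0$ to $\omegaE$. Its image is a path in $\mathcal M$ from the boundary point $0$ to the boundary point $q_E(\omegaE)=q_E(0)$, and a transverse count against a cocore arc shows it crosses the core once. So $(q_E)_*(e_2)=\pm t_{\mathcal M}$, consistent with the relation $2(q_E)_*(e_2)=-(q_E)_*(e_1)$ coming from $(c_E)_*e_2=-e_1-e_2$. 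The image therefore contains a generator.

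The only delicate point is this degree computation for $e_2$. I would settle it in explicit coordinates. A fundamental domain for $c_E$ is the strip $0\le y\le 1/2$, with the boundary identifications induced by $c_E$. In this model the image of the loop $e_2$ is the segment $y\in[0,1/2]$ traversed out and back, composed with the gluing, and its class equals that of the core. Everything else in the argument is formal.
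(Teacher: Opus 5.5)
Your proof is correct and follows the paper's route: surjectivity of $\bar\alpha_*$ is read off from the commutative square, since $(q_E)_*\circ\alpha_*$ is a composite of surjections. The step you call delicate is settled in the paper by exactly the relation you list only as a consistency check. Because $\pi_1(\mathcal M)\cong\Z$ is torsion free, $2(q_E)_*(e_2)=-(q_E)_*(e_1)=\mp 2t_{\mathcal M}$ already forces $(q_E)_*(e_2)=\mp t_{\mathcal M}$, so the strip computation is unnecessary. In that computation, incidentally, the second half of $e_2$ returns along the diagonal $x=y$ from $(1/2,1/2)$ to $0$, not back along the segment $x=0$.
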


\subsection{The coequalizer of the reflection}
\label{sec:coequalizer-sub}

Our next aim is to show that $\pi_1(Y)$ is cyclic.
We first recall the group-theoretic construction used below.

\begin{definition}\label{def:coequalizer}
  {\it Let $f,g\colon A\rightrightarrows B$ be two homomorphisms.
    Their {\bf{\emph{coequalizer}}} in the category of groups is
\[
 \operatorname{coeq}(f,g)
 =B/\normal{f(a)g(a)^{-1}\mid a\in A}.
\]
It comes with a quotient homomorphism $q\colon B\to\operatorname{coeq}(f,g)$ for
which $qf=qg$.  It is universal with this property: whenever
$h\colon B\to C$ satisfies $hf=hg$, there is a unique
homomorphism $\bar h\colon \operatorname{coeq}(f,g)\to C$ with $h=\bar hq$.}
\end{definition}

In the present situation we compare the automorphisms $\phiA$ and $\id$ of
$\PiCS$. Their coequalizer is
\[
 Q=\operatorname{coeq}(\phiA,\id)
  =\PiCS/\normal{\phiA(g)g^{-1}:g\in\PiCS},
\]
the largest quotient of $\PiCS$ on which the reflection acts trivially.

\begin{lemma}\label{lem:kill-s}
$\Gamma_{\mathbb R}/\normal{s}\cong Q$.
\end{lemma}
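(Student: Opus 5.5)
The approach is purely group-theoretic: I will compare the two quotients through their universal properties. Recall that $\Gamma_{\mathbb R}=\PiCS\rtimes_{\phiA}\langle s\mid s^2=1\rangle$, so every element can be written uniquely as $g$ or $gs$ with $g\in\PiCS$. The group is generated by $\PiCS$ and $s$, subject to the relations of $\PiCS$, the relation $s^2=1$, and the conjugation relations $sgs^{-1}=\phiA(g)$ for $g\in\PiCS$.

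\emph{First map.} Consider the composition
\[
\PiCS\hookrightarrow\Gamma_{\mathbb R}\to\Gamma_{\mathbb R}/\normal{s}.
\]
In the target we have $s=1$, so the relation $sgs^{-1}=\phiA(g)$ becomes $g=\phiA(g)$. Hence this homomorphism kills every $\phiA(g)g^{-1}$. By the universal property of \cref{def:coequalizer}, it therefore factors through a homomorphism $\beta\colon Q\to\Gamma_{\mathbb R}/\normal{s}$. The map $\beta$ is surjective, because $\Gamma_{\mathbb R}$ is generated by $\PiCS$ together with $s$, and $s$ maps to the identity.

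\emph{Inverse map.} I define $\gamma\colon\Gamma_{\mathbb R}\to Q$ by $g\mapsto q(g)$ for $g\in\PiCS$ and $s\mapsto1$. The one point that needs care is that $\gamma$ is well defined on the semidirect product, i.e.\ that it respects the defining relations. The relations of $\PiCS$ hold because $q$ is a homomorphism, and $s^2=1$ maps to $1=1$. For the conjugation relation, the image of $sgs^{-1}$ is $q(g)$, while the image of $\phiA(g)$ is $q(\phiA(g))=q(g)$, since $qf=qg$ in the coequalizer. So $\gamma$ is well defined.

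Equivalently, one can check directly that $(g,s^{\epsilon})\mapsto q(g)$ is a homomorphism. Using the multiplication rule $(g,s^\epsilon)(h,s^\delta)=(g\phiA^\epsilon(h),s^{\epsilon+\delta})$, this reduces to the identity $q(\phiA^\epsilon(h))=q(h)$, which holds by the same coequalizer property.

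\emph{Conclusion.} Since $\gamma(s)=1$, the map $\gamma$ factors through a homomorphism $\bar\gamma\colon\Gamma_{\mathbb R}/\normal{s}\to Q$. On generators we have $\bar\gamma\beta(q(g))=q(g)$, and $\beta\bar\gamma$ fixes the images of both $g$ and $s$. So $\beta$ and $\bar\gamma$ are mutually inverse isomorphisms.

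No serious obstacle is expected; the only real content is the well-definedness check for $\gamma$ above.
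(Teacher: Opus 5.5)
Your proof is correct and follows the paper's argument. The paper just observes that setting $s=1$ in the semidirect product turns the relations $sgs^{-1}=\phiA(g)$ into $\phiA(g)=g$, and that $\normal{s}$ is generated by $s$ together with these relations; you make the same point rigorous by constructing the mutually inverse homomorphisms $\beta$ and $\bar\gamma$ and checking that $\gamma$ respects the semidirect product multiplication.
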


\begin{proof}
Killing $s$ in \eqref{eq:GammaR} imposes exactly the relations
$\phiA(g)=sgs^{-1}=g$ for $g\in\PiCS$, and conversely $\normal{s}$ is
generated by $s$ together with these relations.\end{proof}

As $a_1,a_2,a_3$ generate $\PiCS$, in presenting $Q$ it is enough to impose the three
relations
\[
 \phiA(a_i)a_i^{-1}=1,\qquad i=1,2,3.
\]
After these relations are imposed, $\phiA$ and $\id$ agree on a generating set
and therefore on all of the quotient.

\begin{proposition}\label{prop:Q-Z}
The reflection coequalizer is infinite cyclic.  More precisely,
\begin{equation}\label{eq:marked-Q-presentation}
 Q\cong
 \left\langle a_1,a_2,a_3\ \middle|\
 a_3,\ [a_1,a_2],\ a_1^3a_2^{-2}
 \right\rangle ,
\end{equation}
showing that $Q\cong\Z$, generated by $t=a_1^{-1}a_2$.
\end{proposition}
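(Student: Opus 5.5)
Since $\PiCS$ is not given to us by a finite presentation in the text, we cannot simply write down $Q$ from a presentation. The plan is a sandwich argument.

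\emph{Lower bound.} The Albanese map gives $\PiCS\to H_1(X;\Z)\cong\Z^2$, and composing with $(q_E)_*$ from \eqref{eq:qE-surj} gives a homomorphism $\PiCS\to\Z$. In the coordinates \eqref{eq:H1-vectors} it is $h(m,n)=m+n$ up to sign; a suitable form is a linear functional vanishing on $(M-I)\Z^2$. Since $(M-I)(x,y)=(-x-y,-x-y)$, that image is $\Z(1,1)$, so the invariant functional is $h(m,n)=m-n$. By \cref{prop:H1-action} we have $h\circ\phiA=h$ on $\PiCS$, so $h$ factors through $Q$ by the universal property of \cref{def:coequalizer}. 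On generators, $h(a_1)=-2$, $h(a_2)=-3$ and $h(a_3)=0$, so $h$ is surjective and $h(a_1^{-1}a_2)=-1$. Hence $Q$ surjects onto $\Z$ with $t\mapsto$ a generator.

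\emph{Upper bound.} We show that the relations of \eqref{eq:marked-Q-presentation} hold in $Q$, i.e.\ lie in $\normal{\phiA(a_i)a_i^{-1}}$ together with the (unknown) relations of $\PiCS$.
\begin{enumerate}
\item From \eqref{eq:phi-a3} we get $a_3^{-2}=1$ in $Q$. To kill $a_3$ itself, substitute into \eqref{eq:phi-a1}: with $a_3^2=1$ the relation $\phiA(a_1)=a_1$ becomes $a_1^{-1}a_2^{-1}a_1^{-1}a_3^{-1}a_2^3=a_1$.
\item Rearranging, $a_3$ is a word in $a_1,a_2$, so $Q$ is generated by $a_1,a_2$.
\item Feed this into the long relation \eqref{eq:phi-a2}.
\end{enumerate}
One relation in two generators is not enough to force abelianness, so here we also need genuine relations of $\PiCS$. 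The cleanest route is computational: obtain a finite presentation of $\PiCS$ as an index-$864$ subgroup of the finitely presented $\GammaMax$ via Reidemeister--Schreier (coset enumeration in \textsf{GAP}, using the generators $a_1,a_2,a_3$ rewritten in $u,v,b,j$). Then add the three relators $\phiA(a_i)a_i^{-1}$ and let \textsf{GAP} simplify the resulting finitely presented group. The output should be Tietze-equivalent to \eqref{eq:marked-Q-presentation}. Then verify by hand that this presentation is $\Z$: $a_3=1$, the group is abelian, and $3a_1=2a_2$ in additive notation gives $\Z^2/\langle(3,-2)\rangle\cong\Z$. A generator is $a_2-a_1$, since $(3,-2)$ and $(-1,1)$ form a basis of $\Z^2$.

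\emph{Conclusion.} The upper bound makes $Q$ a quotient of $\Z$ generated by $t$, and the lower bound makes $Q$ surject onto $\Z$ sending $t$ to a generator. Since $\Z$ is Hopfian, $Q\cong\Z$ with generator $t$.

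\emph{Main obstacle.} The hard step is the upper bound: showing that $[a_1,a_2]$ and $a_1^3a_2^{-2}$ lie in the normal closure. This genuinely needs the relations of $\PiCS$. I would rely on the \textsf{GAP} Reidemeister--Schreier computation, and then record an explicit derivation in Tietze moves so that the result can be checked independently.
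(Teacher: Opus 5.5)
Your proposal is correct and is essentially the paper's argument. The decisive step is the same \textsf{GAP} computation: present $\PiCS$ as the index-$864$ subgroup of $\GammaMax$, adjoin the relators $\phiA(a_i)a_i^{-1}$, and simplify. Your lower bound via the $M$-invariant functional $h(m,n)=m-n$ is just the hand-checkable fact $Q^{\mathrm{ab}}\cong\Z^2/(M-I)\Z^2\cong\Z$. With it, the sandwich only needs the one-directional computer check that $a_3$, $[a_1,a_2]$ and $a_1^3a_2^{-2}$ die in $Q$, whereas the paper reads off $Q\cong\Z$ directly from the simplified presentation.

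One slip: the composite of the Albanese map with $(q_E)_*$ is $\pm(m-n)$, not $\pm(m+n)$ as you first wrote, since $m+n$ is anti-invariant under $M$. You then correctly use $m-n$, which gives $h(a_1)=-2$, $h(a_2)=-3$, $h(a_3)=0$ and $h(t)=-1$.
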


\begin{proof}
The fact that $Q$ has the presentation of
Equation~\eqref{eq:marked-Q-presentation} has been checked directly in
\textsf{GAP} \cite{GAP4}.  A short program is reproduced in
\cref{app:gap}.

Now the group presented by \eqref{eq:marked-Q-presentation} is abelian, and
a simple argument shows that it is isomorphic to $\Z$.
\end{proof}

\subsection{The upper bound}
\label{sec:upper-bound}

We use Armstrong's orbit-space theorem to show that $\pi _1(Y)$ is
a quotient of $\Z$.

\begin{theorem}[Armstrong \cite{Armstrong}]\label{thm:Armstrong}
Let the discrete group $G$ act discontinuously on a simply connected, locally
compact, path-connected metric space $Z$.  If $N$ is the normal subgroup
generated by the elements of $G$ having a fixed point in $Z$, then
\[
 \pi_1(Z/G)\cong G/N.
\]
\end{theorem}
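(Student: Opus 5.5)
This statement is Armstrong's classical theorem, which the paper quotes from \cite{Armstrong}; the sketch below records how I would prove it from scratch. Fix a base point $z_0\in Z$ and let $p\colon Z\to Z/G$ be the orbit map. The plan has two halves:
\begin{itemize}
\item[(i)] produce a surjective homomorphism $\Phi\colon G\to\pi_1(Z/G,p(z_0))$ that kills every element having a fixed point, so that $\pi_1(Z/G)$ is a quotient of $G/N$;
\item[(ii)] realize $G/N$ as the deck group of a genuine covering $Z/N\to Z/G$ with simply connected total space, so that the induced map $G/N\to\pi_1(Z/G)$ is also injective.
\end{itemize}

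For (i), define $\Phi(g)=[p\circ\gamma_g]$, where $\gamma_g$ is any path in $Z$ from $z_0$ to $gz_0$.
\begin{itemize}
\item \emph{Well-defined.} Since $Z$ is simply connected, any two choices of $\gamma_g$ are homotopic rel endpoints, so the class does not depend on the choice.
\item \emph{Homomorphism.} The path $\gamma_g\cdot(g\circ\gamma_h)$ runs from $z_0$ to $ghz_0$, and $p\circ(g\circ\gamma_h)=p\circ\gamma_h$; hence $\Phi(gh)=\Phi(g)\Phi(h)$.
\item \emph{Fixed-point elements die.} If $g$ fixes $w$, let $\delta$ be a path from $z_0$ to $w$ and take $\gamma_g=\delta\cdot(g\circ\delta)^{-1}$. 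Its image under $p$ is $p(\delta)\cdot p(\delta)^{-1}$, which is null-homotopic, so $\Phi(g)=1$ and $N\subseteq\ker\Phi$.
\item \emph{Surjective.} This needs a path-lifting property for $p$: every path in $Z/G$ starting at $p(z_0)$ lifts to a path in $Z$ starting at $z_0$. A lift of a loop then ends at some $gz_0$, so the loop represents $\Phi(g)$.
\end{itemize}

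Path lifting is where local compactness and discontinuity enter, and I expect it to be the main technical obstacle. For each $z\in Z$, discontinuity and local compactness give a neighbourhood $U_z$ with the following properties:
\begin{itemize}
\item the stabilizer $G_z$ is finite;
\item $U_z$ is $G_z$-invariant;
\item $gU_z\cap U_z=\emptyset$ for $g\notin G_z$.
\end{itemize}
Consequently $p(U_z)\cong U_z/G_z$ is open, and $p^{-1}(p(U_z))$ is the disjoint union of the translates $gU_z$. Lifting a path then reduces, by a Lebesgue-number subdivision, to lifting paths inside a single finite quotient $U_z/G_z$. For this I would use that $U_z\to U_z/G_z$ is an open map and the quotient by a finite group. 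I would first try to use the metric: replace it by the $G_z$-average of the metric translates, so that $G_z$ acts by isometries, and lift path segments by choosing nearest orbit representatives. I would handle the possible non-uniqueness of lifts by allowing any continuous choice, since only existence is needed.

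For (ii), apply (i) to the subgroup $N$ acting on $Z$. Since $N$ is generated by elements having fixed points, it coincides with its own ``$N$''. So (i) gives that $\pi_1(Z/N)$ is a quotient of $N/N=1$, i.e.\ $Z/N$ is simply connected. Next, $G/N$ acts freely on $Z/N$: if $gNz=Nz$, then $gz=nz$ for some $n\in N$, so $n^{-1}g$ fixes $z$. Thus $n^{-1}g\in N$, and hence $g\in N$. The neighbourhoods $U_z$ above descend to evenly covered neighbourhoods, so the action of $G/N$ on $Z/N$ is properly discontinuous. Hence $Z/N\to (Z/N)/(G/N)=Z/G$ is a covering map with simply connected total space and deck group $G/N$. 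Standard covering theory then gives $\pi_1(Z/G)\cong G/N$, and this isomorphism agrees with the map induced by $\Phi$ from (i).
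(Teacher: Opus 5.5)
The paper gives no proof of this statement (it is quoted from Armstrong), so your argument has to stand on its own. Its architecture is the standard one, and the bookkeeping is right:
\begin{itemize}
\item $\Phi$ is well defined and kills $N$.
\item The set of elements with fixed points is conjugation invariant, so $N$ is generated by it as a subgroup and is its own ``$N$''.
\item $G/N$ acts freely on $Z/N$.
\item Your check that the $G/N$-translates of $p_N(U_z)$ are disjoint (using $G_z\subseteq N$) makes $Z/N\to Z/G$ a covering.
\item Once $Z/N$ is simply connected, path and homotopy lifting give $\pi_1(Z/G)\cong G/N$ with no local path-connectedness needed.
\end{itemize}
However, simple connectivity of $Z/N$ rests entirely on surjectivity of $\Phi$ for the group $N$, i.e.\ on path lifting for $Z\to Z/N$. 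After your (correct) Lebesgue-number reduction this becomes the lemma: \emph{if a finite group $K$ acts on a Hausdorff space $U$, every path in $U/K$ lifts to $U$} (with prescribed initial point, by translating by $K$). That lemma is the real content of the theorem, and the proposal does not prove it.

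The method you sketch does not supply it. Choosing, for $t$ near $t_0$, the representative of $f(t)$ nearest to an already chosen lift $x_0$ is well defined and continuous when $K_{x_0}=1$. If $K_{x_0}\neq 1$, then, the action being isometric, the set of nearest representatives is $K_{x_0}$-invariant. Choosing among them continuously is the original problem for $K_{x_0}$ acting near its fixed point $x_0$, so ``allowing any continuous choice'' presupposes exactly what is to be shown. Such points cannot be avoided (in the paper's application they form the preimage of $F$ in $\B$), and a path may return to them infinitely often, so no finite subdivision isolates them.

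A standard way to close the gap is induction on $|K|$:
\begin{itemize}
\item Let $F_K=U^K$ and $T=f^{-1}(\pi(F_K))$, a closed subset of $[0,1]$. Over $T$ the lift is forced, since the fibre over a point of $\pi(F_K)$ is a single point.
\item On each component of $[0,1]\setminus T$, every point $x$ of the fibre has stabilizer $K_x\neq K$ and a $K_x$-invariant neighbourhood $W$ with $kW\cap W=\emptyset$ for $k\notin K_x$. The induction hypothesis for $K_x$ acting on $W$, a Lebesgue-number argument on compact subintervals, and exhaustion then lift $f$ over the whole component.
\item The pieces glue continuously at each $t\in T$. For $y\in F_K$ and any neighbourhood $W$ of $y$, the set $W'=\bigcap_{k\in K}kW$ is $K$-invariant with $\pi^{-1}(\pi(W'))=W'$. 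Hence every lift of a point of $\pi(W')$, in particular of $f(s)$ for $s$ near $t$, lies in $W$.
\end{itemize}
Alternatively, cite the path-lifting theorem for orbit maps of compact Lie group actions, as in Bredon's book on compact transformation groups.
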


The hypotheses apply to $\Gamma_{\mathbb R}$ acting on $\B$.  Recall that by
\cref{prop:antiunitary}, $s$ has a fixed point in $\B$.

\begin{proposition}\label{prop:upper}
There is a surjection
\[
 Q\twoheadrightarrow\pi_1(Y).
\]
In particular, $\pi_1(Y)$ is a quotient of $\mathbb Z$.
\end{proposition}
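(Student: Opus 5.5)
We apply Armstrong's theorem (\cref{thm:Armstrong}) to $G=\Gamma_{\mathbb R}$ acting on $Z=\B$, and compare the resulting quotient $G/N$ with $G/\normal{s}\cong Q$ from \cref{lem:kill-s}.

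\textbf{Hypotheses.} The ball $\B$ is simply connected, locally compact, path connected and metric. The action of $\Gamma_{\mathbb R}$ is properly discontinuous. Indeed, $\PiCS$ acts properly discontinuously, being a discrete subgroup of the isometry group $\mathrm{PU}(F_0)$. Since $\PiCS$ has index two in $\Gamma_{\mathbb R}$, the extension still acts by isometries and is discrete in the full isometry group. Hence \cref{thm:Armstrong} gives $\pi_1(Y)\cong\Gamma_{\mathbb R}/N$, where $N$ is the normal subgroup generated by the elements with a fixed point.

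\textbf{The inclusion $\normal{s}\subseteq N$.} By \cref{prop:antiunitary}, $s$ fixes $[0:0:1]$, so $s\in N$. Since $N$ is normal, $\normal{s}\subseteq N$. Therefore the quotient map $\Gamma_{\mathbb R}\to\Gamma_{\mathbb R}/N$ factors through $\Gamma_{\mathbb R}/\normal{s}\cong Q$, which yields a surjection $Q\twoheadrightarrow\Gamma_{\mathbb R}/N\cong\pi_1(Y)$. By \cref{prop:Q-Z}, $Q\cong\Z$, so $\pi_1(Y)$ is a quotient of $\Z$.

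We only need the inclusion $\normal{s}\subseteq N$ for the surjection. Identifying $N$ exactly, which would require showing that the fixed-point elements are the conjugates of $s$ and of the other reflections $\gamma s$, is unnecessary: $\PiCS$ is torsion free and acts freely, so the only elements with fixed points lie in the coset $\PiCS s$. Combined with \cref{prop:lower}, the surjection then gives $\pi_1(Y)\cong\Z$, since a quotient of $\Z$ that surjects onto $\Z$ must be $\Z$ (Hopfian).

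The only step needing care is verifying that the action is discontinuous in Armstrong's sense. I would handle this by noting that the point stabilizers are finite (of order at most $2$, because $\PiCS$ acts freely) and that orbits are discrete, both inherited from the cocompact lattice $\PiCS$.
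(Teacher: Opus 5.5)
Your proposal is correct and follows the paper's proof. Armstrong's theorem gives $\pi_1(Y)\cong\Gamma_{\mathbb R}/N$ with $s\in N$, so the quotient map factors through $\Gamma_{\mathbb R}/\normal{s}\cong Q\cong\Z$. Your check of the discontinuity hypothesis (the index-two extension is still discrete, and point stabilizers have order at most two) is a reasonable addition, since the paper only asserts that the hypotheses apply.
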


\begin{proof}
Armstrong's theorem gives $\pi_1(Y)=\Gamma_{\mathbb R}/N$, where
$s\in N$.  Hence the quotient map factors through
$\Gamma_{\mathbb R}/\normal{s}\cong Q$ by \cref{lem:kill-s},
and by \cref{prop:Q-Z} we have 
$Q\cong\mathbb Z$.
\end{proof}

Now we can complete the computation of the fundamental group of $Y$:
\begin{proof}[Proof of \cref{thm:main}\textup{(a)}]
For the arithmetic reflection $c=c_s$, \cref{prop:upper} shows that
$\pi_1(Y)$ is a quotient of $Q\cong\mathbb Z$.  Every proper quotient of
$\mathbb Z$ is finite cyclic, whereas \cref{prop:lower} gives a surjection
$\pi_1(Y)\twoheadrightarrow\mathbb Z$, therefore
$ \pi_1(Y)\cong\Z$.

The surjective homomorphism $\bar\alpha_*\colon \Z\to\Z$ is consequently an
isomorphism.  The remaining two reflections are conjugate to $c_s$ by powers
of $\sigma$, and these conjugacies induce diffeomorphisms of the corresponding
orbit spaces and compatible automorphisms of the Albanese torus.  
\end{proof}

From now on we fix the isomorphism $\pi_1(Y)\cong\Z$ given by
$\bar\alpha_*$, and write $t$ for the corresponding generator.

\section{Completion of the proof of \cref{thm:main}}
\label{sec:quotient-manifold}

Write $q_X\colon X\to Y$ for the quotient map and
\[
 F=X^c=\Fix(c)
\]
for the real fixed surface.  Since $c$ is antiholomorphic, its differential
at a fixed point is an anti-$\C$-linear involution of $T_pX\cong\C^2$, whose
fixed subspace is a real form.  Hence $F$ is a smooth totally real surface
of real dimension two at every point; it is nonempty, because $s$ fixes the
negative line $[0:0:1]$ by \cref{prop:antiunitary}.

The orbit space $Y$ is a closed connected oriented topological four-manifold,
and it carries a smooth structure for which $q_X$ is a two-fold branched
covering with branch surface $q_X(F)$.  We orient $Y$ so that $q_X$ is
orientation preserving, which is possible because an antiholomorphic map
preserves the underlying real orientation of a complex surface.

\begin{remark}
Averaging a Riemannian metric over $\langle c\rangle$ gives, at each point of
$F$, normal coordinates in which $c(x_1,x_2,y_1,y_2)=(x_1,x_2,-y_1,-y_2)$,
with $F=\{y_1=y_2=0\}$, so the local orbit space is
$\R^2\times(\R^2/\pm1)$ and the squaring map $w\mapsto w^2$ identifies
$\R^2/\pm1$ with $\R^2$.  Taking these as charts near $q(F)$ and $q$ itself
as a chart away from it gives a smooth structure, because squaring turns the
orthogonal transition maps on the normal directions into rotations and
reflections again.
\end{remark}

The Euler characteristic of the fixed surface
$F$ will be determined by the singular fibers of the
Albanese fibration together with the $S_3$-symmetry.  We first record what
is needed from the literature.

\begin{theorem}[Cartwright--Koziarz--Yeung, Koziarz--Yeung, Rito, \cite{CKY, KY, Rito}]
\label{thm:three-nodes}
The Albanese map $\alpha \colon X\to E$ has exactly three critical points.  Each is
an ordinary node of its fiber. The three lie in three distinct fibers, and
they form a single free orbit of $\langle\sigma\rangle\cong\Z/3\Z$.
\end{theorem}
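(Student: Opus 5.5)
The plan is to combine a topological count with the $\Z/3\Z$-symmetry, and to use the explicit models in \cite{CKY, KY, Rito} only where the count alone does not decide. First I would record the basic properties of $\alpha\colon X\to E$. By \cref{thm:CKY-input}(d), $\alpha_*$ is onto on $\pi_1$, so $\alpha$ has connected fibres of some genus $g\ge 2$. The fibration is not isotrivial, since otherwise $X$ would not lie on the Bogomolov--Miyaoka--Yau line. I would then cite \cite{CKY, KY} for the fact that every fibre is reduced and has only isolated singularities.

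With these properties, the standard formula for the Euler characteristic of a fibred surface applies:
\[
 \chi(X)=\chi(E)\,\chi(F_{\mathrm{gen}})+\sum_{p}\mu_p=\sum_p\mu_p ,
\]
where the sum runs over the critical points of $\alpha$, $\mu_p\ge 1$ is the Milnor number of the fibre singularity at $p$, and $\chi(E)=0$. Since $\chi(X)=3$ by \cref{thm:CKY-input}(a), the total Milnor number is $3$. Hence $\alpha$ has either
\begin{itemize}
\item three critical points with $\mu=1$ each, all of them nodes, or
\item fewer critical points, at least one of which has $\mu\ge 2$.
\end{itemize}

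Next I would use equivariance. By \cref{prop:real-Albanese} we have $\alpha\sigma=\sigma_E\alpha$ with $\sigma_E(z)=\omegaE z$. Therefore $\sigma$ permutes the finite critical set $\mathrm{Crit}(\alpha)$ and preserves Milnor numbers. If $\mathrm{Crit}(\alpha)$ is a single free $\sigma$-orbit, then it has three elements, and the count forces $\mu=1$ at each point, so each is an ordinary node. Moreover the three points lie in three distinct fibres. Indeed, two points of a free orbit in the same fibre $\alpha^{-1}(z)$ would force $\omegaE z=z$. Then $z$ would be one of the three fixed points of $\sigma_E$, and the whole orbit would sit in the single $\sigma$-invariant fibre over $z$. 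I would rule this out by checking, with the explicit description of the singular fibres in \cite{CKY, Rito}, that the three singular values are distinct.

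The remaining case to exclude is a critical point $p$ fixed by $\sigma$. Such a $p$ lies over a fixed point of $\sigma_E$. I would use the description in \cite{CKY} of the fixed points of $j^4$ on $X$, which arise from fixed points of elements $j^4\pi$ in the normalizer acting on $\B$. At each such fixed point one computes the linear action of $\sigma$ on $T_pX$ and the induced action on $T_{\alpha(p)}E$ (multiplication by $\omegaE$). The goal is to show that $d\alpha_p\neq0$ there. A possible clean route: if the rotation weights at $p$ are $(\omegaE,\omegaE)$, as at $x_0$, where $j^4=\operatorname{diag}(\omegaE,\omegaE,1)$, then an invariant function $h$ with $h\circ\sigma=\omegaE h$ has no $\sigma$-equivariant quadratic part. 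Its lowest-order term then has degree at least $4$, which would give $\mu\ge 9>3$, a contradiction.

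The main obstacle is the regularity input: excluding non-reduced or non-isolated fibre singularities, and handling fixed points of $\sigma$ with other rotation weights. There I would fall back on the explicit computation of the singular fibres in \cite{Rito}, which verifies directly that each singular fibre is an irreducible curve with one node.
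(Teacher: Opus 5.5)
The paper gives no proof of this statement. It is imported from \cite{CKY, KY, Rito} and used as a black box, so the question is whether your argument stands on its own.

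Your reduction is correct as far as it goes. Once every fibre is reduced, $\chi(E)=0$ gives $\sum_p\mu_p=3$. Note that reducedness is equivalent to finiteness of $\mathrm{Crit}(\alpha)$, so this input is already part of the claim. Hence $\mathrm{Crit}(\alpha)$ is either a single free $\sigma$-orbit of three nodes or consists of $\sigma$-fixed points only. A free orbit lies either in three distinct fibres or in one $\sigma$-invariant fibre over a fixed point of $\sigma_E$. Your weight lemma is also right: where $d\sigma_p=\omega\cdot\mathrm{id}$ (the multiplier of $\sigma_E$), only degrees $\equiv 1 \pmod{3}$ occur in $h=w\circ\alpha$, so a critical point would have $\mu\ge 9$. (Connectedness of fibres comes from $q(X)=1$, not from $\pi_1$-surjectivity alone, but the count does not need it.)

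The gap is that neither decisive exclusion is proved. Other weights are compatible with the Milnor budget of $3$:
\begin{itemize}
\item At $(\bar\omega,\bar\omega)$ only degrees $\equiv 2 \pmod{3}$ occur. Such a fixed point is therefore automatically critical, and $h=xy+\cdots$, a $\sigma$-fixed node, is allowed.
\item At $(\omega,\bar\omega)$ the critical germs are $ay^2+bx^2y+cx^4+\cdots$, generically an $A_3$ point with $\mu=3$.
\item Along a curve of fixed points with normal weight $\omega$, $h=y\,g(x,y^3)$ allows a node.
\end{itemize}
So you need the global fixed-point data of $\sigma$, and you never determine it. For instance, assume the fixed points are isolated. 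The holomorphic Lefschetz formula, with $\sigma^*=\omega$ on $H^{1,0}$, shows that the numbers of fixed points of types $(\omega,\omega)$, $(\bar\omega,\bar\omega)$, $(\omega,\bar\omega)$ are $(6,0,0)$, $(3,3,0)$ or $(3,0,6)$, according as $\sigma$ acts on $H^{2,0}(X)$ by $\bar\omega$, $\omega$ or $1$. Only the first case is settled by your lemma.

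Likewise, nothing local excludes one $\sigma$-invariant fibre carrying three cyclically permuted nodes, since those nodes are not fixed. At both points you fall back on an explicit description of the singular fibres, which is the theorem itself. Your version, with irreducible fibres, is even stronger than what is stated or used here.

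So your argument, like the paper, ultimately rests entirely on the cited literature. The Milnor-number and symmetry reduction is a correct supplement to that literature, not a substitute for it.
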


By \cref{prop:real-Albanese} the fixed locus of $c_E$ on
$E=\C/(\Z+\omegaE\Z)$ is the circle $E^{c_E}=\R/\Z$, so
the restriction  $\alpha _{\R}:=\alpha |_F$ is a  smooth circle-valued map
\[
 \alpha_{\R}\colon F\longrightarrow E^{c_E}\cong S^1 .
\]

\begin{lemma}\label{lem:real-critical}
A point $p\in F$ is critical for $\alpha_\R$ if and only if it is critical
for $\alpha$.
\end{lemma}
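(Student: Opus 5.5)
The plan is to do linear algebra at a single fixed point $p\in F$, using only that $d\alpha_p$ intertwines the two anti-linear involutions. Equivariance $\alpha c=c_E\alpha$ (\cref{prop:real-Albanese}), differentiated at $p$, gives
\[
 d\alpha_p\circ dc_p=d(c_E)_{\alpha(p)}\circ d\alpha_p .
\]
Here $dc_p$ is an anti-$\C$-linear involution of $T_pX\cong\C^2$, and $d(c_E)$ is complex conjugation on $T_{\alpha(p)}E\cong\C$. Their fixed subspaces are real forms:
\[
 T_pF=(T_pX)^{dc_p}\cong\R^2,\qquad T_{\alpha(p)}E^{c_E}=\R .
\]
So $T_pX=T_pF\oplus iT_pF$ and $T_pF\otimes_\R\C=T_pX$. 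Since $\alpha$ is holomorphic, $d\alpha_p\colon T_pX\to\C$ is $\C$-linear. By the intertwining relation it maps $T_pF$ into $\R$, and $d(\alpha_\R)_p$ is exactly the restriction of $d\alpha_p$ to $T_pF$.

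The key observation is that a $\C$-linear map is determined by its restriction to a real form, and that it is the complexification of that restriction. If $d\alpha_p|_{T_pF}\colon\R^2\to\R$ is zero, then $d\alpha_p$ vanishes on $T_pF+iT_pF=T_pX$. Conversely, if $d\alpha_p=0$, then its restriction certainly vanishes.

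Here "critical" means the differential fails to be surjective, and for a map to a one-dimensional target this is the same as the differential vanishing. So:
\begin{itemize}
\item $p$ is critical for $\alpha_\R$ exactly when $d\alpha_p|_{T_pF}=0$;
\item $p$ is critical for $\alpha$ exactly when $d\alpha_p=0$.
\end{itemize}
The previous paragraph shows these two conditions are equivalent, which proves the lemma.

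The one point needing care is that $T_pF$ really is the fixed subspace of $dc_p$, i.e.\ that $F$ is a smooth submanifold with this tangent space. This was established just before the lemma via the local normal form of an antiholomorphic involution. There is no real obstacle beyond bookkeeping: the step to state explicitly is that $\R$-linearity plus real-valuedness on a real form forces the complexification identity $d\alpha_p(x+iy)=d\alpha_p(x)+i\,d\alpha_p(y)$ for $x,y\in T_pF$.
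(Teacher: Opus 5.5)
Your proposal is correct and follows the paper's argument: both use the splitting $T_pX=T_pF\oplus J(T_pF)$ for the totally real fixed surface and the $\C$-linearity of $d\alpha_p$ to conclude that vanishing on $T_pF$ forces vanishing on all of $T_pX$. Your extra bookkeeping is harmless and left implicit in the paper: equivariance sends $T_pF$ into the real tangent line of $E^{c_E}$, and a $\C$-linear map to $\C$ fails to be surjective only when it is zero.
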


\begin{proof}
At $p\in F$ the tangent space decomposes as $T_pX=T_pF\oplus J(T_pF)$, and
$d\alpha_p$ is complex linear.  If $d\alpha_p$ vanishes on $T_pF$ then for
every $v\in T_pF$ we get $d\alpha_p(Jv)=J\,d\alpha_p(v)=0$, so $d\alpha_p=0$
on all of $T_pX$.  The converse is immediate.
\end{proof}

\begin{lemma}\label{lem:one-real-node}
Exactly one of the three nodes lies in $F$, so $\alpha_\R$ has exactly one
critical point.
\end{lemma}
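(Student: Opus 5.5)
The plan is to use the $S_3$-symmetry of \cref{prop:S3} together with the fact from \cref{thm:three-nodes} that the nodes form a single free $\langle\sigma\rangle$-orbit; no further geometry is needed.

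\textbf{Step 1: $c$ permutes the nodes.} Let $N=\{p,\sigma p,\sigma^2p\}$ be the set of critical points of $\alpha$. By \cref{prop:real-Albanese} we have $\alpha\circ c=c_E\circ\alpha$, and both $c$ and $c_E$ are diffeomorphisms. Differentiating gives
\[
 d\alpha_{c(x)}\circ dc_x=(dc_E)_{\alpha(x)}\circ d\alpha_x .
\]
Hence $d\alpha_{c(x)}$ vanishes exactly when $d\alpha_x$ does, so $c(N)=N$. Since $c^2=1$, $c$ restricts to an involution of the three-element set $N$. An involution of a set of odd cardinality has an odd, hence nonzero, number of fixed points. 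So at least one node lies in $F=\Fix(c)$.

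\textbf{Step 2: at most one node is fixed.} Suppose $c$ fixes two distinct nodes. After relabelling within the orbit, take them to be $p$ and $\sigma p$. The reflection relation $c\sigma c=\sigma^{-1}$ from \cref{prop:S3} gives
\[
 \sigma p=c(\sigma p)=\sigma^{-1}c(p)=\sigma^{-1}p,
\]
so $\sigma^2p=p$. This contradicts freeness of the $\langle\sigma\rangle$-action on $N$. More generally, $c(\sigma^kp)=\sigma^{-k}c(p)$, so $c$ acts on $N\cong\Z/3\Z$ as a reflection $k\mapsto m-k$, which has exactly one fixed point. Therefore exactly one node lies in $F$.

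\textbf{Step 3: transfer to $\alpha_\R$.} By \cref{lem:real-critical}, the critical points of $\alpha_\R$ are precisely the critical points of $\alpha$ that lie in $F$, namely the nodes fixed by $c$. By Step 2 there is exactly one such point. This is the only place that uses the assumption that $c$ is a reflection rather than an arbitrary antiholomorphic involution.

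The only point needing care is Step 1, namely that $c$ preserves the critical set of $\alpha$. The equivariance identity makes this immediate, so I expect no real obstacle.
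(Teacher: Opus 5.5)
Your proof is correct and is essentially the paper's argument: $c$ permutes the three nodes by equivariance of $\alpha$, the relation $c\sigma c=\sigma^{-1}$ together with freeness of the $\langle\sigma\rangle$-orbit rules out $c$ fixing two nodes, and \cref{lem:real-critical} finishes. The only cosmetic differences are that you get a fixed node from the parity of $|N|$, whereas the paper says ``an involution of a three-element set is the identity or a transposition'' and excludes the identity by the same computation, and that your closing remark misplaces where the reflection hypothesis is used (it is Step~2, not Step~3).
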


\begin{proof}
Since $\alpha c=c_E\alpha$, the involution $c$ permutes the three
nodes; let ${\mathcal {N}}=\{n,\sigma n,\sigma^2 n\}$ be that orbit,
which is free by \cref{thm:three-nodes}.  An involution of a
three-element set is either the identity or a transposition.  Suppose
$c$ fixed every element of ${\mathcal {N}}$.  Using $c\sigma
c=\sigma^{-1}$ from \eqref{eq:S3},
\[
 \sigma n=c(\sigma n)=\sigma^{-1}c(n)=\sigma^{-1}n,
\]
so $\sigma^2 n=n$, contradicting freeness.  Hence $c$ is a
transposition on ${\mathcal {N}}$ and fixes exactly one node, which
therefore lies in $F$.  Now apply \cref{lem:real-critical}.
\end{proof}

\begin{lemma}\label{lem:index-one}
The unique critical point of $\alpha_\R$ is nondegenerate of index one.
\end{lemma}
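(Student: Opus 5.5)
Near the real node $p\in F$ we choose holomorphic coordinates adapted to the real structure and then restrict the Morse normal form to the real locus. Since $c$ is an antiholomorphic involution fixing $p$, there are local holomorphic coordinates $(z_1,z_2)$ centred at $p$ in which $c(z_1,z_2)=(\bar z_1,\bar z_2)$. To obtain them, start from any holomorphic chart $w$, form $\tfrac12\bigl(w+\overline{w\circ c}\bigr)$, which is holomorphic with invertible differential at $p$, and note that $c$ becomes complex conjugation in it. In these coordinates $F=\{\operatorname{Im} z_1=\operatorname{Im} z_2=0\}=\R^2$ locally. On $E$ we use the local coordinate $\zeta=z-\alpha(p)$. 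Because $\alpha(p)\in E^{c_E}$ is real, $c_E$ is $\zeta\mapsto\bar\zeta$ in this coordinate by \cref{prop:real-Albanese}. The local holomorphic function $f=\alpha$ then satisfies $f(\bar z)=\overline{f(z)}$, so its Taylor coefficients are real.

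Next we use \cref{thm:three-nodes}: $p$ is an ordinary node of its fiber. This means that $df_p=0$ and that the Hessian $H=\bigl(\partial^2f/\partial z_i\partial z_j(p)\bigr)$ is a nondegenerate complex symmetric $2\times2$ matrix. Since the coefficients are real, $H$ is a real symmetric nondegenerate matrix. The restriction $\alpha_\R=f|_{\R^2}$, read in the chart $\R/\Z\ni x$ near $\alpha(p)$, has Hessian exactly $H$ at $p$. So $p$ is a nondegenerate critical point of $\alpha_\R$, and its index lies in $\{0,1,2\}$, determined by the signature of $H$.

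The main obstacle is to exclude index $0$ and $2$, that is, the case of a definite $H$. If $H$ were definite, the holomorphic Morse lemma with real coefficients (the real version of the complex Morse lemma, which preserves reality of coefficients) would give real coordinates with $f=\epsilon(z_1^2+z_2^2)$ and $\epsilon=\pm1$. We plan to rule this out by a global argument. By \cref{lem:one-real-node} and \cref{lem:real-critical}, $p$ is the unique critical point of the circle-valued map $\alpha_\R\colon F\to S^1$ on the closed surface $F$. A circle-valued Morse function satisfies $\chi(F)=\sum_k(-1)^k c_k$, so $\chi(F)=\pm1$, with $+1$ for a definite Hessian and $-1$ for index one. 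We will derive a contradiction with $\chi(F)=1$ in one of two ways.

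The first route uses the local picture near an extremum. A local extremum of $\alpha_\R$ that is the only critical point forces nearby level sets of $\alpha_\R$ to be small circles shrinking to $p$. Traveling once around $S^1$ from the regular value just above $\alpha_\R(p)$ to the one just below then crosses no critical points, so those two level sets are diffeomorphic. However, the level set on one side of $p$ contains an extra contractible circle component. That component would have to persist all the way around the circle and return to the other side, where it must be a component bounding a disk that shrinks to $p$ from the opposite direction. This is incompatible with $p$ being a strict extremum, and it pins down $F$ as an $S^2$ or $\RP^2$-type piece without a global circle map.

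The second route, which we would try first since it is more robust, uses the Smith-type inequality and congruences for the real structure. The Euler characteristic formula $\chi(Y)=\tfrac12(\chi(X)+\chi(F))$ gives $\chi(F)=2\chi(Y)-3$, which is odd, consistent with either sign. We then combine this with $\pi_1(Y)\cong\Z$, so $b_1(Y)=1$ and $b_3(Y)=1$, and with $b_2(Y)\le b_2(X)=3$, using that $b_2^+(Y)\ge1$ because the Kähler class is anti-invariant. This yields $\chi(Y)=b_2(Y)$, together with the condition that $b_2(Y)$ is odd and positive, so the signature constraint forces $b_2(Y)=1$. Then $\chi(F)=2\cdot1-3=-1$, which rules out the definite case, so the index is one.
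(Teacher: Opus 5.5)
The nondegeneracy half of your proposal is fine and is the paper's argument written in real-adapted coordinates. One small repair: the averaged map $\tfrac12\bigl(w+\overline{w\circ c}\bigr)$ is only a chart if you first normalize $w$ so that $dw_p(T_pF)=\R^2$. For $c$ the conjugation of $\C^2$ and $w=iz$ it is identically $0$.

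The gap is in excluding a definite Hessian. Your second route, the one you would try first, does not work, and its inputs are wrong:
\begin{itemize}
\item $b_2(X)=5$, not $3$, since $\chi(X)=3$ and $b_1(X)=b_3(X)=2$.
\item An anti-invariant class such as the K\"ahler class contributes nothing to $Y$, because $H^2(Y;\Q)\cong H^2(X;\Q)^{c}$.
\item Nothing makes $b_2(Y)$ odd, since $2\chi(Y)-3$ is odd for every value of $\chi(Y)$.
\end{itemize}
More fundamentally, this kind of argument cannot decide the sign even with correct data. Using $\pi_1(Y)\cong\Z$ you get $b_2(Y)=\chi(Y)=\tfrac12(3+\chi(F))$. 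The $G$-signature theorem with $e(\nu_{F/X})=-\chi(F)$ gives $\sgn(Y)=\tfrac12(1-\chi(F))$. Hence $b_2^+(Y)=1$ identically and $b_2^-(Y)=\tfrac12(1+\chi(F))$. Both $\chi(F)=-1$ (with $b_2(Y)=1$) and $\chi(F)=1$ (with $b_2(Y)=2$, $\sgn(Y)=0$) are compatible with Betti numbers, signature and the Smith inequality. In the paper the implication runs the other way: $b_2(Y)=1$ (\cref{cor:chi-Y}) is deduced from this lemma.

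Your first route contains the right idea, and completed correctly it is exactly the paper's proof. Since $F$ is closed, $\alpha_{\R}$ is a proper submersion over the connected arc $S^1\setminus\{x\}$, where $x$ is the critical value. By Ehresmann, the level sets just above and just below $x$ are diffeomorphic, so they have the same finite number of circle components. On the other hand, crossing a local minimum (resp.\ maximum) adds (resp.\ removes) exactly one small circle and leaves the other components unchanged. That count is the contradiction.

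The conclusion you actually draw is not justified. The trivialization around the circle need not carry the small circle near $p$ to anything near $p$, so there is no reason it returns as a disk shrinking to $p$ ``from the opposite direction''. The remark about an ``$S^2$ or $\RP^2$-type piece without a global circle map'' is not an argument either, since $\RP^2$ admits plenty of circle-valued maps. Replace those last sentences by the component count and drop the second route.
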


\begin{proof}
Let $p$ be the critical point and $y_0=\alpha(p)$.  Since $\alpha$ is
holomorphic and $d\alpha_p=0$, its second derivative at $p$ is a well-defined
symmetric $\C$-bilinear form
\[
 H\colon T_pX\times T_pX\longrightarrow T_{y_0}E,
\]
the quadratic term of $\alpha$ in any local holomorphic coordinates:
$\alpha=y_0+\tfrac12H(z,z)+O(|z|^3)$.  That the fiber of $\alpha$ through $p$
has an ordinary node there says precisely that $H$ is nondegenerate.

This nondegeneracy is inherited by the
restriction to $F$:  Since $\alpha(F)\subseteq E^{c_E}$ and $\alpha_\R$ is the
restriction of $\alpha$, and since $d(\alpha_\R)_p=0$ by
\cref{lem:real-critical}, the Hessian of $\alpha_\R$ at $p$ is the
restriction of $H$ to $T_pF\times T_pF$, with values in the real line
$T_{y_0}E^{c_E}$.  Now $F$ is totally real, so $T_pX=T_pF\oplus J\,T_pF$ and
a real basis $e_1,e_2$ of $T_pF$ is at the same time a $\C$-basis of $T_pX$;
likewise a generator of $T_{y_0}E^{c_E}$ is a $\C$-basis of $T_{y_0}E$.  With
respect to these choices the real symmetric matrix
$\bigl(H(e_i,e_j)\bigr)$ of the Hessian of $\alpha_\R$ is literally the
matrix of $H$, so the two determinants are equal.  It is nonzero,
and $\alpha_\R$ is therefore Morse at $p$, of index $0$, $1$ or $2$.

Let $x\in S^1$ be the critical value.  Over $S^1\setminus\{x\}$ the map
$\alpha_\R$ is a proper submersion, hence a locally trivial fibration by
Ehresmann's theorem \cite{Ehresmann}; since $S^1\setminus\{x\}$ is connected, all of its
fibers are diffeomorphic, and in particular the regular level sets on the two
sides of $x$ have the same number of circle components.  A critical point of
index $0$ creates a circle component and one of index $2$ destroys one, so
either would change that number.  Only index one remains.
\end{proof}

\begin{proposition}\label{prop:chi-F}
The real fixed surface satisfies
\begin{equation}\label{eq:chi-F}
 \chi(F)=-1 .
\end{equation}
\end{proposition}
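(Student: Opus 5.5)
We will compute $\chi(F)$ by applying Morse theory to the circle-valued map $\alpha_\R\colon F\to E^{c_E}\cong S^1$ assembled in the preceding lemmas. First, $F$ is compact: it is the fixed set of an involution of the closed manifold $X$, hence closed in $X$. By \cref{lem:real-critical} and \cref{lem:one-real-node}, $\alpha_\R$ has exactly one critical point $p$. By \cref{lem:index-one}, $p$ is nondegenerate of index one.

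For a Morse function with values in $S^1$ on a closed surface, the Euler characteristic equals the signed count of critical points, $\sum_p(-1)^{\operatorname{ind}(p)}$. We justify this in one of two ways. The first is to cut $S^1$ at a regular value $y$: the preimage $\alpha_\R^{-1}(y)$ is a disjoint union of circles, so it contributes Euler characteristic $0$. Cutting $F$ along it yields a compact surface with boundary, carrying a real-valued Morse function that is constant on each of the two boundary pieces and has the same critical points. Classical Morse theory relative to the boundary then gives $\chi$ as the alternating count, and regluing along circles does not change $\chi$. The second, equivalent route is the Poincaré–Hopf theorem applied to the gradient of $\alpha_\R$, a vector field with a single zero of index $(-1)^1=-1$.

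Either way, $\chi(F)=(-1)^1=-1$. The point that needs care is whether $F$ could have additional components on which $\alpha_\R$ has no critical points. This does not affect the count. On such a component $\alpha_\R$ is a submersion to $S^1$, hence a circle bundle over $S^1$ by Ehresmann's theorem, so the component is a torus or a Klein bottle and has Euler characteristic $0$. The formula therefore holds summed over all components, and no connectivity assumption on $F$ is needed.

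The main obstacle is therefore already resolved by the earlier lemmas, namely counting the real nodes and determining their index. The only remaining work is to state cleanly the circle-valued Morse count and to observe that it is insensitive to whether $F$ is orientable. Cutting along a regular fiber handles this, since only $\chi$ is used.
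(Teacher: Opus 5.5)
Your proposal is correct and follows the paper's proof. The paper likewise applies Poincar\'e--Hopf to a gradient-like vector field for the closed form $\alpha_\R^*d\theta$, whose only zero is the index-one critical point supplied by \cref{lem:one-real-node,lem:index-one}, giving $\chi(F)=-1$; your separate treatment of critical-point-free components is unnecessary, since Poincar\'e--Hopf already applies to all of $F$ at once (and strictly, Ehresmann gives there a bundle whose fiber is a disjoint union of circles, though $\chi=0$ still follows).
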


\begin{proof}
Pull back the nowhere-zero closed one-form $d\theta$ on $S^1$ and take a
gradient-like vector field for the circle-valued Morse function
$\alpha_\R$.  Its zeros are the critical points of $\alpha_\R$, with index
$(-1)^{\operatorname{ind}}$, so the Poincar\'e--Hopf theorem gives
$\chi(F)=\sum_p(-1)^{\operatorname{ind}(p)}$.  By
\cref{lem:one-real-node,lem:index-one} there is one critical point and its
index is one.
\end{proof}

\begin{corollary}\label{cor:chi-Y}
  For the Euler characteristic of $Y$ we have $\chi(Y)=1$.  Therefore
  $H_i(Y;\Z)\cong\Z$ for $0\le i\le4$, and in particular
  $H_2(Y;\Z)\cong\Z$. 
\end{corollary}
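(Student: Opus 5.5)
We need $\chi(Y)=1$ and $H_i(Y;\Z)\cong\Z$ for all $0\le i\le 4$.

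\emph{Euler characteristic.} For the two-fold branched cover $q_X\colon X\to Y$ with branch surface $q_X(F)\cong F$, the Riemann--Hurwitz formula reads
\[
 \chi(X)=2\chi(Y)-\chi(F).
\]
One way to see this is to triangulate $Y$ with $q_X(F)$ as a subcomplex, or to use additivity of $\chi$ over $X\setminus F$, which double covers $Y\setminus q_X(F)$. By \cref{thm:CKY-input}(a) we have $\chi(X)=3$, and by \cref{prop:chi-F} we have $\chi(F)=-1$. Hence $3=2\chi(Y)+1$, so $\chi(Y)=1$.

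\emph{Homology.} $Y$ is a closed connected oriented four-manifold, so $H_0(Y)\cong H_4(Y)\cong\Z$. By \cref{thm:main}(a), $\pi_1(Y)\cong\Z$, so $H_1(Y;\Z)\cong\Z$. Poincaré duality and the universal coefficient theorem give
\[
 H_3(Y;\Z)\cong H^1(Y;\Z)\cong\operatorname{Hom}(H_1(Y),\Z)\cong\Z.
\]
The torsion of $H_2(Y)$ equals the torsion of $H^3(Y)\cong H_1(Y)$, which is zero. Thus $H_2(Y)$ is free of rank $b_2$. Then
\[
 \chi(Y)=1-1+b_2-1+1=b_2,
\]
so $b_2=1$ and $H_2(Y;\Z)\cong\Z$.

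\emph{Main point to justify.} The only step needing care is the Riemann--Hurwitz identity, which requires that the branch set be exactly $F$ with $q_X|_F$ injective. This holds because $c$ is an involution, so each point of $F$ has a single preimage and every other point of $Y$ has exactly two. Additivity of $\chi$ under the decomposition of $X$ into $F$ and its complement is legitimate because these are compact-tame pieces: use a closed tubular neighbourhood of $F$ and its boundary.
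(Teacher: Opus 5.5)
Your proposal is correct and follows essentially the same argument as the paper. Your identity $\chi(X)=2\chi(Y)-\chi(F)$ is a rearrangement of the paper's $\chi(Y)=\tfrac12(\chi(X)-\chi(F))+\chi(F)$, and the homology step is the same: Betti numbers from $\pi_1(Y)\cong\Z$ plus Poincar\'e duality, and torsion-freeness of $H_2(Y;\Z)$ from $H_1(Y;\Z)\cong\Z$ via duality. You simply spell out the universal coefficient details that the paper leaves implicit.
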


\begin{proof}
The involution acts freely on $X\setminus F$, so
\[
 \chi(Y)=\tfrac12\bigl(\chi(X)-\chi(F)\bigr)+\chi(F).
\]
As $\chi(X)=3$ and $\chi(F)=-1$ by \eqref{eq:chi-F}, we get $\chi(Y)=1$.
Since $\pi_1(Y)\cong\Z$ we have $b_0=b_1=b_3=b_4=1$, hence $b_2(Y)=1$. Since
$H_1(Y;\Z)\cong\Z$ is torsion free, by Poincar\'e duality $H_2(Y;\Z)$ is
torsion free as well.  The claim on the homology groups follows.
\end{proof}

The above calculation then determines the signature of $Y$ as well.
Indeed, for the totally real submanifold $F\subset X$ we have that the
complex structure identifies the normal bundle of $F\subset X$ with
$TF$, but the ambient orientation convention reverses the induced
orientation, so the normal Euler number is
$e(\nu_{F/X})=-\chi(F)=1$.
By \cite{JO} this implies that the
equivariant signature $\sgn(c, X)$ is equal to 1, see also \cite{Hirzebruch}.

\begin{proposition}\label{prop:intformZ}
  The signature $\sgn (Y)$ is equal to 1, hence the intersection form of $Y$ is $Q_Y=\langle 1\rangle $.
  \end{proposition}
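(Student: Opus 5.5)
We will compute $\sgn(Y)$ from the $G$-signature theorem for the involution $c$ on $X$. For a double branched cover $q_X\colon X\to Y$ with deck involution $c$, the signature of $Y$ is the signature of the $c$-invariant part of $H_2(X;\R)$ with the intersection form of $X$ divided by $2$. Since $q_X^*$ identifies $H^2(Y;\R)$ with $H^2(X;\R)^c$ and multiplies cup products evaluated on fundamental classes by $\deg q_X=2$, we get
\[
 \sgn(Y)=\sgn\bigl(H^2(X;\R)^{c}\bigr)=\tfrac12\bigl(\sgn(X)+\sgn(c,X)\bigr).
\]
Here $\sgn(c,X)$ is the trace of $c^*$ on $H^+$ minus its trace on $H^-$. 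We use that $q_X$ is orientation preserving, as arranged above.

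Next we insert the inputs. By \cref{thm:CKY-input}(a) we have $\sgn(X)=1$. For the equivariant signature, the $G$-signature theorem for an orientation-preserving involution with two-dimensional fixed surface gives $\sgn(c,X)=F\cdot F=e(\nu_{F/X})$; this is the formula quoted from \cite{JO,Hirzebruch}. The text preceding the statement computes
\[
 e(\nu_{F/X})=-\chi(F)=1,
\]
using \eqref{eq:chi-F}. The step I expect to need the most care is exactly this sign: that $J$ identifies $\nu_{F/X}$ with $TF$ and that the orientation convention gives $-\chi(F)$ rather than $+\chi(F)$. I would check it in the local model $F=\R^2\subset\C^2$: the orientation of $T_pF\oplus J\,T_pF$ given by $(e_1,e_2,Je_1,Je_2)$ differs from the complex orientation $(e_1,Je_1,e_2,Je_2)$ by a single transposition. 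Consistency can be tested on $\RP^2\subset\CP^2$: there $\RP^2\cdot\RP^2=-\chi(\RP^2)=-1$, and the formula gives $\sgn(S^4)=\frac12(1-1)=0$, as it should. This gives $\sgn(c,X)=1$, hence $\sgn(Y)=\frac12(1+1)=1$.

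Finally, by \cref{cor:chi-Y} we have $H_2(Y;\Z)\cong\Z$. The intersection form on it is unimodular by Poincaré duality, because $H_1(Y;\Z)\cong\Z$ is torsion free. It therefore equals $\langle\pm1\rangle$, and the signature computation fixes the sign, so $Q_Y=\langle1\rangle$.

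As an independent sanity check of the sign, consider the Lefschetz number $L(c)=\chi(F)=-1$. The action of $c$ on $H_1(X;\R)$ is $M$, which has trace $0$, and $c$ acts trivially on $H_0$ and $H_4$. This gives
\[
 2-0+\operatorname{tr}(c^*|H_2)-0=-1,
\]
so $\operatorname{tr}(c^*|H_2)=-3$. With $b_2(X)=3$, this means $c^*=-1$ on $H_2(X;\R)$. Then $\dim H_2(X;\R)^c=0$, which contradicts $b_2(Y)=1$ unless the Lefschetz computation is redone with the correct action on $H_3\cong H_1$, where $c$ reverses orientation-dual classes. Before relying on the formula, I would therefore carry out this bookkeeping carefully as a cross-check on the signature argument.
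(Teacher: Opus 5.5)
Your main argument is correct and is the paper's proof. You get $\sgn(Y)=\tfrac12\bigl(\sgn(X)+\sgn(c,X)\bigr)$ from the $\Z/2$ $G$-signature theorem and $\sgn(c,X)=e(\nu_{F/X})=-\chi(F)=1$; your local orientation check and the $\RP^2\subset\CP^2$ test of the sign are both right. Then $H_2(Y;\Z)\cong\Z$ forces $Q_Y=\langle 1\rangle$.

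The closing Lefschetz ``sanity check'', however, is wrong as written, and the apparent contradiction it produces is spurious. The error is not in $H_3$ but in $b_2(X)$. Since $b_1(X)=b_3(X)=2$ and $\chi(X)=3$, one has $b_2(X)=\chi(X)+2=5$ (with $b^+=3$, $b^-=2$), not $3$.

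The rest of your trace bookkeeping is fine. In particular $\operatorname{tr}\bigl(c_*|H_3(X;\R)\bigr)=0$: for an orientation-preserving involution, Poincar\'e duality makes the trace on $H_3$ equal to the trace on $H_1$, which is $\operatorname{tr}M=0$. So $\operatorname{tr}\bigl(c_*|H_2(X;\R)\bigr)=-3$ on a $5$-dimensional space means eigenvalue $+1$ with multiplicity one and $-1$ with multiplicity four. Hence $\dim H_2(X;\R)^c=1=b_2(Y)$, in agreement with the Euler characteristic count, and there is nothing to redo.

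This is also consistent with $\sgn(c,X)=1$. The invariant line is positive, the anti-invariant $4$-dimensional part has signature $0$, and $c$ has trace $-1$ on a maximal positive subspace and trace $-2$ on the negative one. Note that the Lefschetz count alone cannot fix the sign. A negative invariant line with anti-invariant part of signature $2$ is equally compatible with $\sgn(X)=1$. So the $G$-signature input is what actually determines $\sgn(Y)=+1$.
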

\begin{proof}
  From the $G$-signature theorem for $G=\Z/2\Z$ (see
  \cite{Hirzebruch}) we have that  $\sgn (Y)=\frac{1}{2}(\sgn (X)+
  \sgn (c, X))=1$. Since $H_2(Y; \Z )\cong \Z$, it implies the statement
  on the intersection form.
  \end{proof}

\subsection{The equivariant intersection form}
\label{sec:equivariant-form}

By the combination of theorems of Freedman \cite{Fr} and Donaldson
\cite{Do}, a closed \emph{simply connected} smooth four-manifold is determined up
to homeomorphism by its Euler characteristic and its signature, together
with the parity of its intersection form.  For manifolds with nontrivial
fundamental group the classification is more delicate, and one has to
understand the \emph{equivariant} intersection form.

Let
\[
 \Lambda=\Z[t,t^{-1}],\qquad \overline t=t^{-1},
\]
where $t$ generates $\pi_1(Y)$ as fixed at the end of
\cref{sec:coequalizer}.  Write $p\colon \widetilde Y\to Y$ for
the universal cover, so that as $\Lambda$-modules we have
$H_2(Y;\Lambda)=H_2(\widetilde Y;\Z)$ with $t$ acting as the deck
translation. Let
\begin{equation}\label{eq:lambda-def}
 \lambda_Y(x,y)=\sum_{n\in\Z}\bigl(x\cdot t^ny\bigr)\,t^n
\end{equation}
be the equivariant intersection pairing, where $x\cdot z$ denotes the
ordinary intersection number in $\widetilde Y$.  The sum is finite because
$x$ and $y$ are represented by compact cycles and the deck action is properly
discontinuous.  As the intersection number is symmetric and invariant under
deck translations, $\lambda_Y$ is $\Lambda$-linear in $x$, conjugate linear
in $y$, and satisfies $\lambda_Y(y,x)=\overline{\lambda_Y(x,y)}$.  The
classification result of four-manifold topology we use is the following.

\begin{theorem}[Freedman--Quinn]\label{thm:FQ}
Let $N$ be a closed oriented smooth four-manifold with $\pi_1(N)\cong\Z$.
Then $H_2(N;\Lambda)$ is a finitely generated free $\Lambda$-module and the
equivariant intersection pairing $\lambda_N$ on it is nonsingular Hermitian.
If moreover $\lambda_N\cong\langle1\rangle$, then $N$ is homeomorphic to
$\CP^2\#(S^1\times S^3)$.
\end{theorem}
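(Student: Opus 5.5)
The argument splits into two independent parts: first the algebraic statements about $H_2(N;\Lambda)$ and $\lambda_N$, and then the classification step. Throughout, $\widetilde N$ denotes the universal cover, which is the infinite cyclic cover of $N$.

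\emph{Structure of $H_2(N;\Lambda)$.} The main tools are the Milnor-type exact sequences for infinite cyclic covers together with the universal coefficient spectral sequence
\[
E_2^{p,q}=\operatorname{Ext}^p_\Lambda\bigl(H_q(N;\Lambda),\Lambda\bigr)\Rightarrow H^{p+q}(N;\Lambda).
\]
One computes the low-degree groups directly: $H_0(N;\Lambda)=\Z$ with trivial $t$-action, and $H_1(N;\Lambda)=H_1(\widetilde N)=0$ because $\widetilde N$ is simply connected. Since $\widetilde N$ is noncompact, $H_4(N;\Lambda)=0$. Equivariant Poincar\'e duality gives $H_3(N;\Lambda)\cong H^1(N;\Lambda)$. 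In the spectral sequence this group is controlled by $\operatorname{Ext}^1_\Lambda(\Z,\Lambda)\cong\Z$, which is hit by a differential, and the upshot is $H_3(N;\Lambda)=0$. Poincar\'e duality gives $H_2(N;\Lambda)\cong H^2(N;\Lambda)$, and the spectral sequence then yields an exact sequence
\[
0\to \operatorname{Ext}^2_\Lambda(\Z,\Lambda)\to H^2(N;\Lambda)\to \operatorname{Hom}_\Lambda(H_2,\Lambda)\to \operatorname{Ext}^3_\Lambda(\Z,\Lambda)=0 .
\]
Because $\Lambda$ has global dimension two, $\operatorname{Ext}^2_\Lambda(\Z,\Lambda)=0$ (one can check this from the free resolution $0\to\Lambda\xrightarrow{t-1}\Lambda\to\Z\to 0$). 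Hence the adjoint map $H_2\to\operatorname{Hom}_\Lambda(H_2,\Lambda)$ is an isomorphism, and in particular $H_2(N;\Lambda)$ is reflexive. A finer look at the chain complex shows more: $H_2(N;\Lambda)$ is stably free, being the kernel of a map in a short resolution by free modules after the vanishing of $H_1$ and $H_3$. Over the Laurent polynomial ring $\Z[t,t^{-1}]$, finitely generated projective modules are free (Quillen--Suslin type results for Laurent extensions, or Swan's theorem). So $H_2(N;\Lambda)$ is free, and $\lambda_N$ is nonsingular because its adjoint is the duality isomorphism above. Hermitian symmetry was already noted after \eqref{eq:lambda-def}.

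\emph{Classification.} This is the part I expect to be the main obstacle, and it cannot be done by elementary means: it is Freedman--Quinn's surgery-theoretic classification for the good group $\Z$. The plan is to realize $S^1\times S^3\#\CP^2$ as a model with form $\langle 1\rangle$. Given an isometry of intersection forms, one constructs a degree-one normal map, or more precisely a homotopy equivalence, between $N$ and the model. For $\pi_1=\Z$ the homotopy type is determined by the equivariant intersection form. The obstruction to upgrading this to a homeomorphism then lives in $L_5(\Z[\Z])$ and in the Kirby--Siebenmann invariant. The Kirby--Siebenmann invariant vanishes because $N$ is smooth. The surgery obstruction is handled by the $s$-cobordism theorem, which holds topologically for the good group $\Z$, together with $Wh(\Z)=0$. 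The conclusion is that $N$ is homeomorphic to $\CP^2\#(S^1\times S^3)$. In the paper I would cite Freedman--Quinn for this step, together with Wang's or Stong's statement for $\pi_1=\Z$, rather than reprove it.
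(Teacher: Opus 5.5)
\textbf{A step in your algebraic part fails.} The claim $H_3(N;\Lambda)=0$ is false.

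In the universal coefficient spectral sequence, the term $E_2^{1,0}=\operatorname{Ext}^1_\Lambda(\Z,\Lambda)\cong\Z$ lies on the bottom row in column $p=1$. Every differential into it starts in a column $p<0$, and every differential out of it lands in a row $q<0$. So it cannot be hit, and it survives to $E_\infty$. Since $E_2^{0,1}=\operatorname{Hom}_\Lambda(H_1,\Lambda)=0$, this gives $H^1(N;\Lambda)\cong\Z$, hence $H_3(N;\Lambda)\cong\Z$.

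Geometrically, $\widetilde N$ has two ends, so $H^1_c(\widetilde N)\cong\Z$. For $S^1\times S^3$ or $\CP^2\#(S^1\times S^3)$, the class is a cross-sectional $S^3$ in $\R\times S^3$. This is also why the paper, in \cref{prop:rank-one}, only claims $H_3(Y;K)=0$ over the field $K=\Q(t)$, where $t-1$ is invertible.

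Consequently, your justification of stable freeness rests on a false premise: you describe $H_2$ as ``the kernel of a map in a short resolution by free modules after the vanishing of $H_1$ and $H_3$''. Your nonsingularity argument does survive. The identification $H^2(N;\Lambda)\cong\operatorname{Hom}_\Lambda(H_2,\Lambda)$ uses only three facts: $H_1=0$; $\operatorname{Ext}^2_\Lambda(\Z,\Lambda)=0$, which holds because $\operatorname{pd}_\Lambda\Z=1$ (your parenthetical reason, not global dimension two); and $\operatorname{Ext}^3_\Lambda=0$.

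\textbf{How to repair projectivity.} Either of the following works with what you already have.
\begin{enumerate}
\item Dual-module route. You have $H_2\cong\overline{\operatorname{Hom}_\Lambda(H_2,\Lambda)}$. The dual of any finitely generated $\Lambda$-module is the kernel of a map between finitely generated free modules (dualize a presentation). It is therefore a second syzygy of the cokernel, hence projective because $\Lambda$ has global dimension two.
\item Chain-level route. The facts $H_0=\Z$, $\operatorname{pd}_\Lambda\Z=1$ and $H_1=0$ make $Z_2$ a projective summand of $C_2$. The dual facts $H^4\cong H_0=\Z$ and $H^3\cong H_1=0$ make $B_2=\operatorname{im}\partial_3$ a projective direct summand of $C_2$, hence of $Z_2$. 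So $H_2=Z_2/B_2$ is projective. The vanishing you actually need is that of $H^3$, not $H_3$.
\end{enumerate}
Swan's theorem then gives freeness.

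\textbf{Comparison with the paper.} The paper proves none of this: it cites \cite[Chapter~10]{FQ} and \cite[Theorem~3.1]{HambletonSurvey} for the entire statement, including freeness and nonsingularity. For the classification step you likewise defer to Freedman--Quinn. So the looseness of your surgery sketch is harmless; for example, it runs together the $L_5(\Z[\Z])$-action on the structure set and the $s$-cobordism step.
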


This is the classification of \cite[Chapter~10]{FQ}, valid because $\Z$ is a
good group in the sense of Freedman, see also 
\cite[Theorem~3.1]{HambletonSurvey}.

For $\CP^2\#(S^1\times S^3)$ the form is easy to read off.  Its
universal cover is obtained from $\C^2\setminus\{0\}$
by taking the connected sum with one copy of 
$\CP^2$ for each deck translate;
the projective lines in these copies form a $\Z$-basis of $H_2$,
distinct translates are disjoint and each has self-intersection $+1$.  Hence
as a $\Lambda$-module we have
\begin{equation}\label{eq:lambda-model}
 \lambda_{\CP^2\#(S^1\times S^3)}=\langle 1\rangle .
\end{equation}

\begin{proposition}\label{prop:rank-one}
As $\Lambda$-modules we have $H_2(Y;\Lambda)\cong\Lambda$.
\end{proposition}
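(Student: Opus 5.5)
The plan is to combine the freeness statement of \cref{thm:FQ} with a rank computation. Since $Y$ is a closed oriented smooth four-manifold with $\pi_1(Y)\cong\Z$ (\cref{thm:main}(a)), \cref{thm:FQ} says that $H_2(Y;\Lambda)$ is a finitely generated free $\Lambda$-module, say $\Lambda^r$. It then suffices to show $r=1$. We will read off the rank after tensoring with $\Z$ through the augmentation $\varepsilon\colon\Lambda\to\Z$, $t\mapsto1$, because then $H_2(Y;\Lambda)\otimes_\Lambda\Z\cong\Z^r$.

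To compare with $H_2(Y;\Z)\cong\Z$ from \cref{cor:chi-Y}, we use the equivariant chain complex $C_*=C_*(\widetilde Y)$. This is a complex of finitely generated free $\Lambda$-modules, and $C_*\otimes_\Lambda\Z=C_*(Y)$. The universal coefficient (Künneth) spectral sequence $\operatorname{Tor}^\Lambda_p(H_q(\widetilde Y),\Z)\Rightarrow H_{p+q}(Y;\Z)$ has only two nonzero columns $p=0,1$, because $\Z=\Lambda/(t-1)$ has the free resolution $0\to\Lambda\xrightarrow{t-1}\Lambda\to\Z\to0$. It therefore degenerates into the Wang/Milnor exact sequence of the infinite cyclic cover:
\[
 \cdots\to H_2(\widetilde Y)\xrightarrow{t-1}H_2(\widetilde Y)\to H_2(Y)\to H_1(\widetilde Y)\xrightarrow{t-1}H_1(\widetilde Y)\to\cdots.
\]
Here $\widetilde Y$ is the universal cover, so $H_1(\widetilde Y)=0$. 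Hence
\[
 H_2(Y;\Z)\cong\operatorname{coker}\bigl(t-1\text{ on }H_2(\widetilde Y)\bigr)=H_2(Y;\Lambda)\otimes_\Lambda\Z\cong\Z^r .
\]
Comparing with $H_2(Y;\Z)\cong\Z$ gives $r=1$, so $H_2(Y;\Lambda)\cong\Lambda$.

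The main point needing care is the freeness input. If we did not want to rely on the finite-generation and freeness part of \cref{thm:FQ}, we would argue directly as follows. $H_2(\widetilde Y)$ is $\Lambda$-torsion free and reflexive via Poincaré duality and the universal coefficient spectral sequence for $\pi_1=\Z$, using $H^1_c$ and $H_1(\widetilde Y)=0$. Since $\Lambda$ has global dimension two, it is then stably free, and stably free modules over the Laurent polynomial ring $\Z[t^{\pm1}]$ are free by Quillen--Suslin/Swan. Given the statement of \cref{thm:FQ}, however, we simply cite it.

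A cross-check is the Euler characteristic. Since $\chi(\widetilde Y)$ computed over $\Lambda$ equals $\chi(Y)=1$, and $H_0(\widetilde Y)=\Z$, $H_1=0$, $H_3=H_4=0$, we get $\operatorname{rank}_\Lambda H_2=\chi(Y)-0=1$. Here the $\Lambda$-ranks of $H_0$ and of $H_3\cong\overline{H^1_c}$ vanish because those modules are $\Lambda$-torsion ($\Z$ with $t=1$). This again gives rank one.
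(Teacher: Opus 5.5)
Your argument is correct, but it computes the rank by a different route from the paper.

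The paper passes to the fraction field $K=\Q(t)$. It shows $H_i(Y;K)=0$ for $i\ne 2$, using that $t-1$ is a unit in $K$, that $H_1(\widetilde Y)=0$, and Poincar\'e duality with local coefficients. It then reads off $\dim_K H_2(Y;K)=\chi(Y)=1$, and only afterwards invokes \cref{thm:FQ} to upgrade ``rank one'' to ``free of rank one''.

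You instead augment down to $\Z$ via the short exact sequence $0\to C_*(\widetilde Y)\xrightarrow{t-1}C_*(\widetilde Y)\to C_*(Y)\to 0$. This is exactly the argument the paper gives later in \cref{lem:augmentation}. You then compare $\Lambda^r/(t-1)\Lambda^r\cong\Z^r$ with $H_2(Y;\Z)\cong\Z$ from \cref{cor:chi-Y}. In fact only $b_2(Y)=1$ is needed, since the isomorphism with $\Z^r$ forces torsion-freeness anyway.

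Your route avoids duality over $K$ and lets one lemma do double duty. However, it leans on freeness more heavily. Augmentation does not detect $\Lambda$-rank for non-free modules: $\Lambda/(t-1)$ has rank $0$ but augments to $\Z$. By contrast, $\dim_K$ computes the rank of any finitely generated $\Lambda$-module, so the paper needs \cref{thm:FQ} only for the final upgrade.

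Your Euler-characteristic cross-check is essentially the paper's proof. There, though, $H_3(\widetilde Y)\cong H^1_c(\widetilde Y)\cong\Z$ because the cover has two ends; it is not $0$ as you first write. Only its $\Lambda$-rank vanishes, which is what your next sentence correctly uses.
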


\begin{proof}
Let $K=\Q(t)$ be the quotient field of $\Lambda$; it is flat over $\Lambda$.
Since $t-1$ is invertible in $K$ we get $H_0(Y;K)=K/(t-1)K=0$ (as homology with
local coefficients). Furthermore,
$H_1(Y;\Lambda)=H_1(\widetilde Y;\Z)=0$ because $\widetilde Y$ is simply
connected, so $H_1(Y;K)=0$.  Poincar\'e duality with local coefficients
\cite[\S3.H]{Hatcher} then gives $H_3(Y;K)\cong\overline{H^1(Y;K)}=0$ and
$H_4(Y;K)\cong\overline{H^0(Y;K)}=0$, as over the  field $K$ we get
$H^i(Y;K)\cong\operatorname{Hom}_K(H_i(Y;K),K)$.  The cellular chain complex
of $\widetilde Y$ is finite and free over $\Lambda$, so its Euler characteristic
is $\chi(Y)$ for any coefficients, and therefore
\[
 \dim_K H_2(Y;K)=\chi(Y)=1 .
\]
By \cref{thm:FQ} the module $H_2(Y;\Lambda)$ is free, and its rank equals
$\dim_K H_2(Y;K)=1$, implying the result.
\end{proof}

\begin{lemma}\label{lem:rank-one-forms}
A nonsingular Hermitian form $\lambda$ on a free $\Lambda$-module of rank one
has matrix $\langle 1\rangle $ or $\langle -1\rangle $.
\end{lemma}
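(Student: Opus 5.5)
Fix a generator $e$ of the free rank-one $\Lambda$-module, so that the form is determined by the single element $f=\lambda(e,e)\in\Lambda$. We will show that $f$ must be a unit of $\Lambda$ satisfying $\bar f=f$. We will then check that changing the basis lets us normalize $f$ to $\pm1$.

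First, Hermitian symmetry $\lambda(y,x)=\overline{\lambda(x,y)}$ gives $\bar f=f$. Nonsingularity means that the adjoint map $\Lambda\to\overline{\operatorname{Hom}_\Lambda(\Lambda,\Lambda)}\cong\Lambda$, $x\mapsto\lambda(\cdot,x)$, is an isomorphism. On the basis this map is multiplication by $f$ (up to the involution), so $f$ must be a unit of $\Lambda$.

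The units of $\Lambda=\Z[t,t^{-1}]$ are exactly $\pm t^n$ with $n\in\Z$. This follows because $\Lambda$ is a Laurent polynomial ring over the domain $\Z$: a product of Laurent polynomials has its top and bottom degree terms given by products of the extreme coefficients. Hence $f g=1$ forces both $f$ and $g$ to be monomials with coefficients $\pm1$. Imposing $\bar f=f$ on $f=\pm t^n$ gives $\pm t^{-n}=\pm t^n$, so $n=0$ and $f=\pm1$.

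Finally, a change of basis $e\mapsto ue$ with $u$ a unit replaces $f$ by $u\bar u f$. For $u=\pm t^m$ we get $u\bar u=1$, so the sign is an invariant and both $\langle1\rangle$ and $\langle-1\rangle$ occur. The only step that needs any care is the description of the units of $\Lambda$; the rest is formal.
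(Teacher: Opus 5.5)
Your proof is correct and follows the paper's argument: nonsingularity makes $f=\lambda(e,e)$ a unit $\pm t^n$, and Hermitian symmetry $\bar f=f$ forces $n=0$. Your justification of the units of $\Lambda$ and your remark that the sign is a basis invariant are harmless additions; note that no change of basis is actually needed, since $f=\pm1$ already holds for every generator $e$.
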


\begin{proof}
Let $e$ be a basis of the module and set $u=\lambda(e,e)$.  In this basis
$\lambda$ is the $1\times1$ matrix $(u)$, and nonsingularity says exactly
that this matrix is invertible over $\Lambda$, that is, $u$ is a unit.
As the units of $\Lambda=\Z[t,t^{-1}]$ are the elements $\pm t^n$, we
get $u=\pm t^n$ for some $n$.  The involution fixes $\Z$ and sends
$t\mapsto t^{-1}$, so $\overline u=\pm t^{-n}$ with the same sign, and
Hermitian symmetry gives $u=\overline u$, that is $t^n=t^{-n}$.  Hence
$n=0$ and $u=\pm1$.
\end{proof}
Define the \emph{augmentation} map $\varepsilon\colon \Lambda\to\Z$ by
sending $t\mapsto1$.

\begin{lemma}\label{lem:augmentation}
  The natural map
  $H_2(Y;\Lambda)\otimes_{\Lambda,\varepsilon}\Z\to H_2(Y;\Z)$ is an
  isomorphism, and it carries $\lambda_Y$ to the ordinary intersection
  form $Q_Y$.
\end{lemma}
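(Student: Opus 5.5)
We prove the two assertions of \cref{lem:augmentation} separately: first the isomorphism via the Cartan--Leray (universal coefficient) spectral sequence for $\widetilde Y\to Y$, then the compatibility of the forms via transfer.

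\emph{Isomorphism.} Let $C_*=C_*(\widetilde Y)$ be the cellular chain complex, a finite free $\Lambda$-complex with $C_*\otimes_{\Lambda,\varepsilon}\Z=C_*(Y)$. There is a Künneth/universal coefficient spectral sequence
\[
E^2_{p,q}=\operatorname{Tor}^\Lambda_p\bigl(H_q(\widetilde Y),\Z\bigr)\Rightarrow H_{p+q}(Y;\Z).
\]
Since $\Lambda$ has global dimension $2$, only $p=0,1,2$ occur. We have $H_1(\widetilde Y)=0$. The module $H_0(\widetilde Y)=\Z$ is the trivial module, and $\operatorname{Tor}^\Lambda_p(\Z,\Z)=H_p(S^1)$, which is $\Z$ for $p=0,1$ and $0$ for $p=2$. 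Moreover $H_2(\widetilde Y)\cong\Lambda$ is free by \cref{prop:rank-one}, so its higher Tor terms vanish.

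In total degree $2$ the only contributions are therefore $E^2_{0,2}=H_2(\widetilde Y)\otimes_\Lambda\Z$ and $E^2_{2,0}=0$ (and $E^2_{1,1}=0$). We must also check the differentials. The only possible differential out of $E_{0,2}$ would be incoming, $d_3\colon E^3_{3,0}\to E^3_{0,2}$, and $E_{3,0}=0$. Hence the edge map
\[
H_2(\widetilde Y)\otimes_\Lambda\Z\to H_2(Y;\Z)
\]
is an isomorphism.

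As a cross-check, both sides are $\Z$: the left side is $\Lambda\otimes_\Lambda\Z=\Z$, and the right side is $\Z$ by \cref{cor:chi-Y}. So it also suffices to know surjectivity, and a surjection $\Z\to\Z$ is an isomorphism. The edge map is the map induced by $p_*$, i.e. the natural map of the statement.

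\emph{Forms.} For $x,y\in H_2(\widetilde Y)$ we use the identity
\[
p_*x\cdot p_*y=\sum_n x\cdot t^ny=\varepsilon\bigl(\lambda_Y(x,y)\bigr).
\]
To prove it, represent $x,y$ by transverse compact cycles in general position with respect to all translates. The preimage $p^{-1}(p(y))$ is the disjoint union of the translates $t^ny$, and the local intersection signs downstairs equal those upstairs, because $p$ is an orientation-preserving local diffeomorphism (the deck group preserves orientation, as $Y$ is oriented). Each intersection point of $p(x)$ with $p(y)$ lifts uniquely to a point of $x\cap t^ny$ for a unique $n$.

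The expected subtle step is genericity: $p\circ x$ and $p\circ y$ might fail to be embedded. We handle this by working with immersed, or simply singular, transverse cycles, where intersection numbers are still computed by counting signed transverse points. This gives the compatibility of $\lambda_Y$ with $Q_Y$ after tensoring with $\varepsilon$.
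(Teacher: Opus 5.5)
Your proof is correct. The half about the forms is the paper's argument: intersection points of $p_*x$ and $p_*y$ correspond bijectively, with matching signs, to pairs consisting of an integer $n$ and a point of $x\cap t^ny$. Your device of counting intersections of singular or immersed cycles as pairs of parameter points is what makes ``lifts uniquely'' precise. It also covers the fact that the translates $t^ny$ need not be pairwise disjoint, so $p^{-1}(p(y))$ is a union of translates but not necessarily a disjoint one.

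For the isomorphism you take a different, heavier route than the paper. You use the K\"unneth spectral sequence $\operatorname{Tor}^\Lambda_p(H_q(\widetilde Y),\Z)\Rightarrow H_{p+q}(Y;\Z)$. The paper instead uses the short exact sequence $0\to C_*(\widetilde Y)\xrightarrow{t-1}C_*(\widetilde Y)\to C_*(Y)\to0$ and its long exact sequence. Since $0\to\Lambda\xrightarrow{t-1}\Lambda\to\Z\to0$ is a free resolution of $\Z$, $\operatorname{Tor}^\Lambda_p(-,\Z)=0$ for $p\ge2$. Your spectral sequence therefore has only the columns $p=0,1$ and collapses to exactly the paper's sequence, so the two are the same computation. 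The paper's packaging is more elementary and identifies the kernel $(t-1)H_2(Y;\Lambda)$ directly.

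In your version, several ingredients are unnecessary in total degree $2$: the appeal to \cref{prop:rank-one} (hence to Freedman--Quinn), the global dimension of $\Lambda$, and the ``cross-check'' $\Z\to\Z$. Only $H_1(\widetilde Y)=0$ and $\operatorname{Tor}^\Lambda_2(\Z,\Z)=0$ are actually used.

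One small slip in your differential bookkeeping: besides $d^3$ from $E_{3,0}$, the differential $d^2\colon E^2_{2,1}\to E^2_{0,2}$ also lands in $E_{0,2}$. It vanishes because $E^2_{2,1}=\operatorname{Tor}^\Lambda_2(H_1(\widetilde Y),\Z)=0$. In any case all differentials vanish once you note that only the columns $p=0,1$ are nonzero.
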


\begin{proof}
  Give $Y$ a finite CW structure and let $C_*(\widetilde Y)$ be the
  cellular chain complex of the universal cover ${\widetilde {Y}}$, a
  finite complex of free $\Lambda$-modules with $H_*(C_*(\widetilde
  Y))=H_*(Y;\Lambda)$ and $C_*(\widetilde
  Y)\otimes_{\Lambda,\varepsilon}\Z=C_*(Y)$.  Multiplication by $t-1$
  is injective on $\Lambda$ with cokernel $\Z$. As $C_*(\widetilde Y)$
  is free, with the covering map $p\colon {\widetilde {Y}}\to Y$
  inducing $p_{\#}$ on chains, we have that
\[
 0\longrightarrow C_*(\widetilde Y)\xrightarrow{\ t-1\ }C_* (\widetilde Y)
 \xrightarrow{\ p_\#\ }C_*(Y)\longrightarrow0
\]
is a short exact sequence of chain complexes.  Its homology long exact
sequence is
\[
 \cdots\to H_n(Y;\Lambda)\xrightarrow{\ t-1\ }H_n(Y;\Lambda)
 \xrightarrow{\ p_*\ }H_n(Y;\Z)\to H_{n-1}(Y;\Lambda)
 \xrightarrow{\ t-1\ }H_{n-1}(Y;\Lambda)\to\cdots .
\]
Since $H_1(Y;\Lambda)=H_1(\widetilde Y;\Z)=0$, the portion at $n=2$ reads
\[
 H_2(Y;\Lambda)\xrightarrow{\ t-1\ }H_2(Y;\Lambda)
 \xrightarrow{\ p_*\ }H_2(Y;\Z)\longrightarrow0 ,
\]
so $p_*$ is onto with kernel $(t-1)H_2(Y;\Lambda)$ and therefore induces an
isomorphism
\[
 H_2(Y;\Lambda)\otimes_{\Lambda,\varepsilon}\Z
 =H_2(Y;\Lambda)/(t-1)H_2(Y;\Lambda)
 \xrightarrow{\ \cong\ }H_2(Y;\Z),
\]
which is the map of the statement.

For the pairings, let $x,y\in H_2(Y;\Lambda)=H_2(\widetilde Y;\Z)$ be
represented by compact cycles with $x$ transverse to $t^ny$ for every $n$.
The covering $p$ is a local orientation-preserving homeomorphism, and two
points of $\widetilde Y$ have the same image if and only if they differ by a
deck translation.  Hence the transverse intersections of $p_*x$ and $p_*y$
in $Y$ correspond bijectively, and with matching signs, to the pairs
consisting of an integer $n$ and a point of $x\cap t^ny$.  Counting with
signs,
\[
 p_*x\cdot p_*y=\sum_{n\in\Z}x\cdot t^ny
 =\varepsilon\bigl(\lambda_Y(x,y)\bigr)
\]
by \eqref{eq:lambda-def}, which is the assertion about the forms.
\end{proof}

\begin{corollary}\label{cor:lambda-Y}
The equivariant intersection form of $Y$ is $\lambda_Y\cong \langle 1\rangle $ over
$\Lambda$.
\end{corollary}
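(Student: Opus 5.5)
By \cref{prop:rank-one}, $H_2(Y;\Lambda)$ is free of rank one over $\Lambda$; fix a basis $e$. By \cref{thm:FQ} the pairing $\lambda_Y$ is nonsingular Hermitian, so \cref{lem:rank-one-forms} applies and $\lambda_Y(e,e)=\pm1$. The rest of the argument only has to decide the sign, and we will read it off from the ordinary intersection form.

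For this we use \cref{lem:augmentation}. Augmentation gives an isomorphism $H_2(Y;\Lambda)\otimes_{\Lambda,\varepsilon}\Z\cong H_2(Y;\Z)\cong\Z$, so the image $p_*e$ generates $H_2(Y;\Z)$. The same lemma gives
\[
 Q_Y(p_*e,p_*e)=\varepsilon\bigl(\lambda_Y(e,e)\bigr)=\lambda_Y(e,e),
\]
where the last equality holds because $\lambda_Y(e,e)=\pm1$ is already an integer and is fixed by $\varepsilon$. By \cref{prop:intformZ}, $Q_Y=\langle 1\rangle$, so the generator $p_*e$ has square $+1$. Hence $\lambda_Y(e,e)=1$ and $\lambda_Y\cong\langle1\rangle$.

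No step here is a real obstacle, since everything is assembled from earlier results. The one point to check is that the sign of $Q_Y$ is computed with the same orientation of $Y$ as the one used for $\lambda_Y$. Both use the orientation for which $q_X$ preserves orientation, and the universal cover $p$ is orientation preserving, as noted in the proof of \cref{lem:augmentation}, so the two signs agree. Combined with \cref{thm:FQ}, this finishes part (b) of \cref{thm:main}.
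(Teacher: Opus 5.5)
Your proposal is correct and is essentially the paper's own argument: freeness of rank one (\cref{prop:rank-one}) together with nonsingularity from \cref{thm:FQ} gives $\lambda_Y=\langle\pm1\rangle$ by \cref{lem:rank-one-forms}, and augmenting via \cref{lem:augmentation} to $Q_Y=\langle 1\rangle$ (\cref{prop:intformZ}) fixes the sign. Your check that both forms use the same orientation, pulled back along $p$, is a harmless clarification that the paper leaves implicit.
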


\begin{proof}
By \cref{thm:FQ} the form $\lambda_Y$ is nonsingular Hermitian, and
$H_2(Y;\Lambda)$ is free of rank one by \cref{prop:rank-one}; so
$\lambda_Y=\langle u\rangle$ with $u=\pm1$ by
\cref{lem:rank-one-forms}.  Augmenting,
$\langle\varepsilon(u)\rangle=\langle u\rangle$ is the ordinary
intersection form $Q_Y=\langle 1\rangle $ by
\cref{lem:augmentation} and Proposition~\ref{prop:intformZ},
implying $u=1$.
\end{proof}

\begin{proof}[Proof of \cref{thm:main}]
Part~(a) was verified at the end of Section~\ref{sec:coequalizer}.
For part~(b), $Y$ is a closed oriented smooth four-manifold with
$\pi_1(Y)\cong\Z$, and $\lambda_Y\cong\langle1\rangle$ by
\cref{cor:lambda-Y}, so \cref{thm:FQ} gives a homeomorphism
$Y\cong\CP^2\#(S^1\times S^3)$, consistent with
\eqref{eq:lambda-model}.  The remaining two reflections are conjugate
to $c_s$ by powers of $\sigma$ (\cref{cor:BY-comparison}), so their
orbit spaces are diffeomorphic to this one.
\end{proof}
  
\begin{proof}[Proof of \cref{cor:branched-cover}]
Each component of the branch locus $q_X(F)$ is the image of the fixed-point
set of an antiholomorphic isometric involution of $\B$, which is
therefore totally geodesic and hyperbolic \cite[Section~2.5]{GKL}, see
also \cite[Theorem~5.2]{KharKul}. Since $\PiCS$ is torsion free, that
component is a closed hyperbolic surface and therefore has $\chi<0$.
As $\chi(F)=-1$ by \cref{prop:chi-F}, the surface $F$ is connected,
and since it is a closed surface with $\chi=-1$, it is
$\#_3\RP^2$.
\end{proof}

\newpage

\appendix
\section{A verification in \texorpdfstring{\textsf{GAP}}{GAP}}\label{app:gap}

Both steps behind \cref{prop:Q-Z}---presenting $\PiCS$ and simplifying
$Q$---are implemented in the standard computational group theory system
\textsf{GAP} \cite{GAP4}:

\begin{lstlisting}
f := FreeGroup( "u", "v", "b" );;  u := f.1;;  v := f.2;;  b := f.3;;
G := f / [ u^3, v^4, b^3, (u*v)^2*((v*u)^2)^-1, v*b*v^-1*b^-1,
           (b*u*v)^3, (b*u*v*u)^2*v ];;
u := G.1;;  v := G.2;;  b := G.3;;  j := (u*v)^2;;
a1 := v*u*v^-1*j^4*b*u*v*j^2;;
a2 := v^2*u*b*u*v^-1*u*v^2*j;;
a3 := u^-1*v^2*u*j^9*b*v^-1*u*v^-1*j^8;;
H  := Subgroup( G, [ a1, a2, a3 ] );;
Index( G, H );                                # 864
# a presentation of Pi whose i-th generator IS a_i:
iso := IsomorphismFpGroupByGenerators( H, [ a1, a2, a3 ] );;
Pi  := Image( iso );;   A := GeneratorsOfGroup( Pi );;
List( [a1,a2,a3], g -> Image( iso, g ) ) = A; # true
AbelianInvariants( Pi );                      # [ 0, 0 ]
# adjoin the three coequalizer relators phi(a_i) a_i^-1:
Q := Pi / [ A[1]^-1*A[2]^-1*A[1]^-1*A[3]^-3*A[2]^3*A[1]^-1,
            A[1]^-1*A[2]^-1*A[1]*A[2]^2*A[1]^-1*A[2]^-1*A[1]*A[3]^-1
                *A[1]^-1*A[2]*A[1]*A[2]^-1,
            A[3]^-2 ];;
B := GeneratorsOfGroup( Q );;
IsAbelian( Q );                               # true
AbelianInvariants( Q );                       # [ 0 ]   so Q = Z
h := IsomorphismSimplifiedFpGroup( Q );;
RelatorsOfFpGroup( Image( h ) );
                       # [ F2*F1*F2^-1*F1^-1, (F2*F1^-1)^2*F1^-1 ]
List( B, x -> Image( h, x ) );                # [ F1, F2, <identity ...> ]
Index( Q, Subgroup( Q, [ B[1]^-1*B[2] ] ) );  # 1,  so Q = <t>, t = a1^-1 a2
[ (B[1]^-1*B[2])^2 = B[1], (B[1]^-1*B[2])^3 = B[2] ];    # [ true, true ]
\end{lstlisting}

Running the file produces 
\begin{lstlisting}
864
true
[ 0, 0 ]
true
[ 0 ]
[ F2*F1*F2^-1*F1^-1, (F2*F1^-1)^2*F1^-1 ]
[ F1, F2, <identity ...> ]
1
[ true, true ]
\end{lstlisting}


\begin{thebibliography}{99}

\bibitem{Armstrong}
M. Armstrong,
\emph{The fundamental group of the orbit space of a discontinuous group},
Proc. Cambridge Philos. Soc. \textbf{64} (1968), 299--301.

\bibitem{BY}
L. Borisov and S.-K. Yeung,
\emph{Explicit equations of the Cartwright--Steger surface},
\'Epijournal de G\'eom\'etrie Alg\'ebrique \textbf{4} (2020), Article 10.

\bibitem{CKY}
D. Cartwright, V. Koziarz, and S.-K. Yeung,
\emph{On the Cartwright--Steger surface},
J. Algebraic Geom. \textbf{26} (2017), 655--689.

\bibitem{CS}
D. Cartwright and T. Steger,
\emph{Enumeration of the $50$ fake projective planes},
C. R. Math. Acad. Sci. Paris \textbf{348} (2010), 11--13.

\bibitem{CS2}
D. Cartwright and T. Steger,
\emph{Finding generators and relations for groups acting on the hyperbolic
ball}, preprint; see \cite{CKY} and the files linked there.

\bibitem{Do}
S. Donaldson,
\emph{An application of gauge theory to four-dimensional topology},
J. Differential Geom. \textbf{18} (1983), 279--315.

\bibitem{Ehresmann}
C.~Ehresmann,
\emph{Les connexions infinit\'esimales dans un espace fibr\'e diff\'erentiable},
Colloque de topologie (espaces fibr\'es), Bruxelles 1950,
Georges Thone, Li\`ege; Masson et Cie., Paris, 1951, pp.~29--55.

\bibitem{Fr}
M.~H. Freedman,
\emph{The topology of four-dimensional manifolds},
J. Differential Geom. \textbf{17} (1982), 357--453.

\bibitem{FQ}
M.~H. Freedman and F.~Quinn,
\emph{Topology of 4-Manifolds},
Princeton Mathematical Series \textbf{39}, Princeton University Press, 1990.

\bibitem{GAP4}
The GAP~Group,
\emph{GAP -- Groups, Algorithms, and Programming, Version 4.12.1}, 2022,
\url{https://www.gap-system.org}.

\bibitem{GKL}
W.~Goldman, M.~Kapovich and B.~Leeb,
\emph{Complex hyperbolic manifolds homotopy equivalent to a
              {R}iemann surface},
Comm. Anal. Geom. \textbf{9} (2001), 61--95.

\bibitem{HambletonSurvey}
I.~Hambleton,
\emph{Intersection forms, fundamental groups and 4-manifolds},
Proceedings of the G\"okova Geometry--Topology Conference 2008,
G\"okova Geometry/Topology Conference (GGT), 2009, 137--150.

\bibitem{Hatcher}
A.~Hatcher,
\emph{Algebraic Topology},
Cambridge University Press, 2002.

\bibitem{Hirzebruch}
F.~Hirzebruch, 
\emph{The signature of ramified coverings},
 in: Global {A}nalysis ({P}apers in {H}onor of {K}. {K}odaira)
  (1969), 253--265.
 
\bibitem{JO}
K.~J\"{a}nich and E.~Ossa,
\emph{On the signature of an involution},
Topology \textbf{8} (1969), 27--30.

\bibitem{KharKul}
V.~Kulikov and V.~Kharlamov, 
\emph{On real structures on rigid surfaces},
Izv. Ross. Akad. Nauk Ser. Mat. \textbf{66} (2002), 133--152.

\bibitem{KY}
V.~Koziarz and S.-K. Yeung,
\emph{Stability of the Albanese fibration on the Cartwright--Steger surface},
Taiwanese J. Math. \textbf{25} (2021), 251--256.

\bibitem{Kuiper}
N.~Kuiper, 
\emph{The quotient space of {${\bf C}P(2)$} by complex conjugation
              is the {$4$}-sphere},
Math. Ann. \textbf{208} (1974), 175--177.

\bibitem{Massey}
W.~Massey, 
\emph{The quotient space of the complex projective plane under
              conjugation is a {$4$}-sphere},
Geometriae Dedicata \textbf{2} (1973), 371--374.

\bibitem{Miyaoka}
Y.~Miyaoka,
\emph{On the Chern numbers of surfaces of general type},
Invent. Math. \textbf{42} (1977), 225--237.

\bibitem{PY}
G.~Prasad and S.-K. Yeung,
\emph{Fake projective planes},
Invent. Math. \textbf{168} (2007), 321--370.
Addendum, Invent. Math. \textbf{182} (2010), 213--227.

\bibitem{Rito}
C.~Rito,
\emph{Surfaces with canonical map of maximum degree},
J. Algebraic Geom. \textbf{31} (2022), 127--135.

\bibitem{Yau}
S.-T. Yau,
\emph{Calabi's conjecture and some new results in algebraic geometry},
Proc. Nat. Acad. Sci. U.S.A. \textbf{74} (1977), 1798--1799.
\end{thebibliography}
\end{document}